\documentclass[reqno,12pt]{amsart}
\usepackage{graphicx}
\usepackage{booktabs}

\usepackage[english]{babel}

\usepackage{amsfonts,amssymb,amsthm,amscd,latexsym}
\usepackage{amsmath}
\allowdisplaybreaks
\usepackage{graphicx,epsfig}
\usepackage{psfrag}
\usepackage{perpage}
\usepackage{url}
\usepackage{color}
\usepackage{bbm}
\usepackage{epstopdf}
\usepackage[utf8]{inputenc}
\usepackage[T1]{fontenc}
\usepackage{microtype}
\usepackage{hyperref}
\usepackage{mathabx}
\usepackage{rotating}
\usepackage{amsbsy,enumerate}
\usepackage{graphicx}
\usepackage{ccaption}
\usepackage{comment}
\usepackage{mathrsfs} 
\usepackage{tikz}
\usepackage{fullpage}

\usepackage{amsmath,amsfonts,amssymb,amsthm,amscd,latexsym}
\usepackage{graphicx,epsfig}
\usepackage{psfrag}
\usepackage{perpage}
\usepackage{url}
\usepackage{color}
\usepackage{bbm}
\usepackage{epstopdf}
\usepackage[utf8]{inputenc}
\usepackage[T1]{fontenc}
\usepackage{microtype}
\usepackage{hyperref}
\usepackage{mathabx}

\counterwithin{figure}{section}
\numberwithin{figure}{section}

\numberwithin{equation}{section}

\newtheorem{theorem}{Theorem}[section]
\newtheorem{lemma}[theorem]{Lemma}

\newtheorem{property}[theorem]{Property}

\newcommand{\dd}{{\rm d}}

\newcommand{\cI}{\mathcal{I}}

\newcommand{\cM}{{\mathcal{M}}}

\newcommand{\cN}{{\mathcal{N}}}

\newcommand{\cL}{{\mathcal{L}}}
\newcommand{\cJ}{{\mathcal{J}}}
\newcommand{\cW}{{\mathcal{W}}}

\newcommand{\sS}{_{\rm S}}
\newcommand{\sI}{_{\rm I}}
\newcommand{\sR}{_{\rm R}}
\newcommand{\hS}{^{\rm S}}
\newcommand{\hI}{^{\rm I}}

\newcommand{\rS}{{\rm S}}
\newcommand{\rI}{{\rm I}}
\newcommand{\rR}{{\rm R}}

\newcommand{{\paa}[1]}{p_{00,#1}}
\newcommand{{\pab}[1]}{p_{01,#1}}
\newcommand{{\pba}[1]}{p_{10,#1}}
\newcommand{{\pbb}[1]}{p_{11,#1}}

\newcommand{\eee}{{\rm e}}

\allowdisplaybreaks[4]

\title{Infection models on dense dynamic random graphs}
\author{Simone Baldassarri}
\address{Gran Sasso Science Institute, Viale Francesco Crispi 7, 67100 L’Aquila, Italy}
\email{simone.baldassarri@gssi.it} 

\author{Peter Braunsteins}
\address{School of Mathematics and Statistics, Anita B.\ Lawrence Centre, UNSW Sydney, Sydney NSW 2052, Australia}
\email{p.braunsteins@unsw.edu.au}

\author{Frank den Hollander}
\address{Mathematisch Instituut, Universiteit Leiden, Einsteinweg 55, 2333 CC  Leiden, The Netherlands}
\email{denholla@math.leidenuniv.nl} 

\author{Michel Mandjes}
\address{Mathematisch Instituut, Universiteit Leiden, Einsteinweg 55, 2333 CC  Leiden, The Netherlands}
\email{m.r.h.mandjes@math.leidenuniv.nl} 

\date{\today}

\begin{document}

%%%%%%%%%%%%%%%%%%%%%%%%%%%%%%%%%%%%%%%%%%%%%%%%%%%%%%

\begin{abstract}
    We consider Susceptible-Infected-Recovered (SIR) models on dense dynamic random graphs, in which the joint dynamics of vertices and edges are {\it co-evolutionary}, i.e., they influence each other bidirectionally. In particular, edges appear and disappear over time depending on the states of the two connected vertices, on how long they have been infected, and on the total density of susceptible and infected vertices. Our main results establish functional laws of large numbers for the densities of susceptible, infected, and recovered vertices, jointly with the underlying evolving random graphs in the graphon space. Our results are supported by simulations, which characterize the limiting size of the epidemics, i.e., the limiting density of susceptible vertices, and how the {\it peak} of the epidemics depends on the rate of the evolution of the underlying graph. 
    
    The proofs of our main results rely on the careful construction of a {\it mimicking process}, obtained by approximating the two-way feedback interaction between vertex and edge dynamics with a mean-field type interaction, acting only as one-way feedback, that remains sufficiently close to the original co-evolutionary process. To treat the more general setting in which edge dynamics are affected by the proportions of susceptible and infected individuals, we introduce a methodological extension of existing techniques. We thus show that our model exhibits multiple epidemic peaks -- a phenomenon observed in real-world epidemics -- which can emerge in models that incorporate mutual feedback between vertex and edge dynamics.
    
\vskip 0.5truecm
\noindent
{\it MSC} 2020 {\it subject classifications.} 
60F17, %Functional limit theorems; invariance principles
60K35, %Interacting random processes; statistical mechanics type models; percolation theory 
60K37. %Processes in random environment
\\[0.2cm]
{\it Key words and phrases.} SIR dynamics, graph dynamics, co-evolution.\\[0.2cm]
{\it Acknowledgment.} The work in this paper was supported by the European Union’s Horizon 2020 research and innovation programme under the Marie Skłodowska-Curie grant agreement no.\ 101034253, and by the NWO Gravitation project NETWORKS under grant no.\ 024.002.003. SB was further supported through “Gruppo Nazionale per l’Analisi Matematica, la Probabilità e le loro Applicazioni” (GNAMPA-INdAM).

\bigskip\noindent
\includegraphics[height=3em]{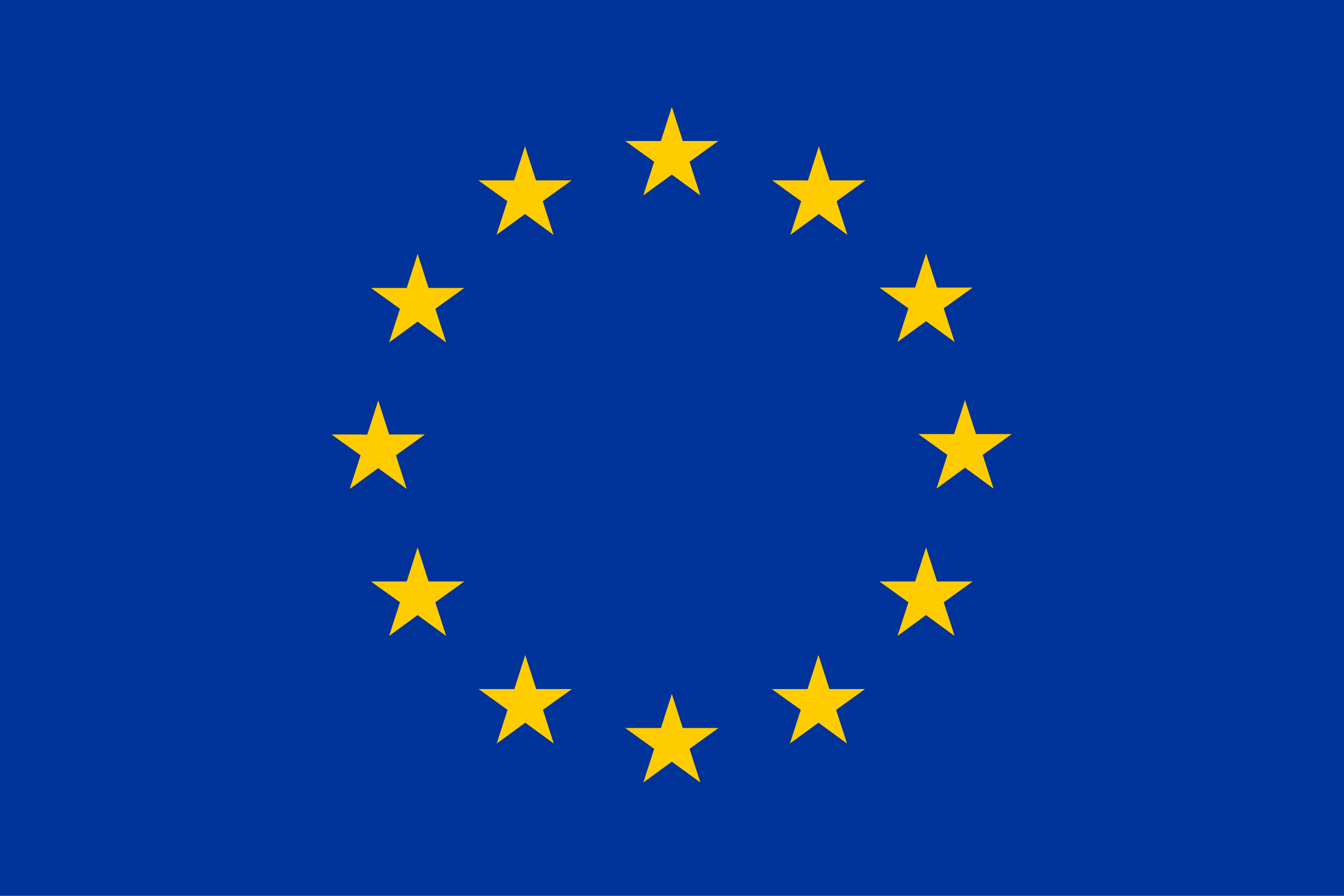} 
\end{abstract}

\maketitle

\newpage

\tableofcontents

%%%%%%%%%%%%%%%%%%%%%%%%%%%%%%%%%%%%%%%%%%%%%%%%%%%%%%%%%%%

\section{Introduction}

Mathematical models play a crucial role in understanding and managing the spread of infectious diseases. These models offer insight into how various parameters influence the dynamics of an epidemic, which can be instrumental in forecasting outbreaks and designing effective control and mitigation strategies.

The base epidemic model is the {\it Susceptible-Infected-Recovered} (SIR) compartmental model. The deterministic version of this model is characterized by a system of differential equations that captures the evolution of the proportion of susceptible, infected, and recovered individuals in the population over time. The same differential equations also appear in the stochastic version of the model, as a large population limit established through a \textit{functional law of large numbers} (FLLN). In the present paper, we establish such a FLLN for a considerably more general version of the base SIR model.

The basic version of the SIR model relies on simplifying assumptions that are typically not satisfied in practice. One key assumption is \emph{homogeneous mixing}, which implies that every individual in the population is equally likely to come into contact with every other individual. If we represent each individual in the population as a vertex in a graph and each social connection through which the epidemic can spread as an edge, then the assumption of homogeneous mixing is equivalent to assuming that \emph{every edge is present in the graph}. However, this assumption overlooks several crucial aspects of real-world social networks during a pandemic. In reality, the network is \textit{heterogeneous}, reflecting that some individuals have more social connections than others. It is also \textit{dynamic}, as connections can appear and disappear over time. Moreover, these dynamics are \textit{co-evolutionary}: the individual epidemiological states (i.e., susceptible, infected, or recovered) are influenced by the structure of the network (since the epidemic spreads through existing connections) and the network itself evolves in response to the epidemic, as individuals may choose to alter their connections to prevent further spread of the disease. 

At a more precise level, a realistic model should incorporate two types of co-evolution:
\begin{enumerate}
\item[(a)] First, the dependence of the network on the individual epidemiological states can be \textit{local}. For example, susceptible individuals may choose to break ties with other individuals they suspect to be infected.
\item[(b)] Second, there can be \textit{global} feedback mechanisms. For instance, quarantine measures may be introduced once the proportion of infected individuals exceeds a certain threshold.
\end{enumerate}
Ideally, we would work with a model in which the underlying graph is heterogeneous (i.e., not complete), is dynamic (i.e., evolves over time), and exhibits co-evolution (i.e., the network influences the epidemic dynamics and is, in turn, shaped by them -- both at local and global scales).

In this paper we introduce a stochastic SIR model that explicitly captures the realistic network features discussed above. We analyze this model in the {\it dense regime} in which the number of edges scales roughly as $n^2$, with $n$ denoting the number of individuals. Our first goal is to establish a FLLN for the proportion of susceptible, infected, and recovered individuals over time. While our model is inherently more complex, this result is in a similar spirit as the FLLN for the base SIR model. In view of the general co-evolutionary dynamics incorporated in the model, we are also interested in how the network evolves during the pandemic. Our second objective is therefore to establish a FLLN for the dynamic network. The proposed model is capable of reproducing complex real-world phenomena, such as the emergence of multiple infection peaks — patterns that cannot be captured in SIR models without incorporating the co-evolutionary feedback between individual states and network structure. We use our results to gain insight into the evolution of the epidemic and the state of the network as the epidemic evolves.

Our results should be viewed within the context of SIR models on random graphs. We begin by providing a brief overview of this subarea. Broadly, the literature can be categorized into four main groups:
\begin{itemize}
\item[$\circ$] A FLLN for the SIR model on a \textit{sparse static} configuration model was proposed in \cite{Volz08}. This result was subsequently rigorously established in \cite{BR13}, \cite{BP12}, \cite{DDMT12}, and \cite{JLW14} under progressively weaker assumptions on the distribution of the vertex degrees in the configuration model. A similar result for an SIR model on a sparse static stochastic block model appeared more recently in \cite{BHI24}. \smallskip
\item[$\circ$] FLLNs for SIR models on \emph{dense static} random graphs have been considered in \cite{DFGTVZ24,KHT22,PPV25}. In these papers, the random graph through which the epidemic spreads is constructed by sampling from a reference graphon. The results in \cite{PPV25} apply when the process exhibits non-Markovian dynamics but hold only when then number of edges scale as $n^2$, whereas the results in \cite{DFGTVZ24,KHT22} consider Markovian dynamics but also hold when the number of edges scale as $n^{1+a}$ for $a \in (0,1]$. \smallskip
\item[$\circ$] FLLNs for the SIR model on \emph{sparse dynamic} random graphs with co-evolutionary feedback are established in \cite{BBLS19, BB22, CHY25, JKYJD19} (see also \cite{DY22} for the SI model). In these papers, susceptible individuals can break edges with infected neighbors, potentially rewiring the edge to another individual. A primary focus of these papers is to establish a discontinuity in the final proportion of susceptible individuals at the critical value of the contact rate. In these papers, the co-evolutionary dynamics are local, in the sense that susceptible individuals break the connection with their infected neighbors in a manner that does not depend on the overall state of the epidemic (i.e., the total number of infected individuals).  \smallskip
\item[$\circ$] A FLLN for the SIR model on \textit{dense dynamic} random graphs is established in \cite{HR24}. In this paper the underlying graph is a dynamic stochastic block model that allows for graph degrees that scale at least as $\log n$ (so at least $n \log n$ edges). However, there is no co-evolutionary feedback in the graph dynamics considered, i.e., the state of the graph evolves independently of the epidemic states of the individuals.
\end{itemize}
Currently, the literature lacks a FLLN for an SIR model on a dense dynamic random graph that incorporates both local and global co-evolutionary feedback. Moreover, no efforts have been made to establish such a law for the evolution of the network itself (i.e., jointly with the proportion of susceptible, infected, and recovered individuals). The present paper aims to fill these gaps. More broadly, our contributions belong to the growing literature for related models on dynamic random graphs. See for instance \cite{ABHdHQ25,BS17,dSOV21,HUVV22,JLM22,LT18,MHZ25,V24} and references therein.

Methodologically, our paper builds on two earlier works \cite{BdHM22,BBdHM24} concerning graph-valued stochastic processes.
A central element of these papers is the concept of a {\it graphon} (developed in \cite{BC08, BC12, L12,LS06,LS07}), which is used to describe the limit of the network. Graphons have been extensively used to characterize the limit of other dense graph-valued processes; see for instance \cite{AdHR25, BK25, BdHM23, C16, G22, R}. However, to the best of our knowledge, the present paper is the first to establish a limit in the space of graphons for a co-evolutionary model with a global feedback mechanism.

In \cite{BdHM22} limits are established for a graph-valued processes with one-way dependence (i.e., not co-evolutionary), in the setting where the edge dynamics can depend on the collective states of all vertices across the network (i.e., global dependence). On the other hand, in \cite{BBdHM24} limits are established for graph-valued processes that are co-evolutionary, but where the co-evolutionary feedback is local only (i.e., no global feedback). In the present paper, we use the mechanism in \cite{BdHM22} to allow the edge dynamics to depend, for example, on the total number of infected individuals (i.e., global feedback), and we adapt the proof technique in \cite{BBdHM24} to establish our results in this more general setting. This technique builds on the notion of a {\it mimicking process} introduced in \cite{BBdHM24}, which approximates the bidirectional interaction between vertex and edge dynamics by a mean-field-type, {\it single-directional} interaction that remains sufficiently close to the original co-evolutionary process. Our analysis suggests that incorporating global feedback mechanisms can be achieved with surprising ease within the framework established in \cite{BBdHM24}. {We expect that this approach is applicable more broadly to other dense co-evolutionary processes.}

The paper is organized as follows. In Section \ref{sec:model_results} we detail the system dynamics, present our main theoretical findings, and provide some discussion, concluding remarks and outlook. The simulations reported in Section \ref{sec:num} reveal that our theory is capable of reproducing patterns observed in practice, most notably trajectories with multiple peaks. A road map of the proof is provided in Section \ref{sec:roadmap}, while the actual proofs are provided in Section \ref{sec:proofs}. 

%%%%%%

\section{Model and main results}\label{sec:model_results}
In this section we first define the stochastic dynamics underlying our SIR model on a dense dynamic random graph. This description specifies the way that we incorporate both local and global co-evolutionary feedback. We then state and briefly discuss our main results. 

%%%%%%

\subsection{Model definition}

In this paper we analyze an SIR model on a dynamic random graph. Given a time horizon $T>0$, we denote by $(G_n(t))_{t\in[0,T]}$ an evolving random graph with vertex set $[n]:=\{1,2,...,n\}$. 
For $i,j \in [n]$, we let $e_{ij}(t)=1$ if edge $ij$ is active at time $t$ and 0 otherwise. The graph $G_n(t)$ is undirected, so $e_{ij}(t)=e_{ji}(t)$, and contains no self-loops, so $e_{ii}(t)=0$. 
Let $x_i(t)\in\{\rS,\rI,\rR\}$ be the state of vertex $i\in[n]$ at time $t\in[0,T]$, where `$\rS$' means that it is susceptible, `$\rI$' means that it is infected, and `$\rR$' means that it is recovered. 

%%%%%%

\subsubsection*{Initialization} At time $t=0$ we initialize the graph as an Erd\H{o}s-R\'enyi random graph with connection probability $p_0\in(0,1)$. Each vertex is infected with probability $q_0\in(0,1)$ and susceptible with probability $1-q_0$, independently of the states of the other vertices and of the initial random graph. Thus, no vertex is in the recovered state at  $t=0$. 

%%%%%%

\subsubsection*{Definition of vertex types}
For any infected vertex $i\in[n]$, we define its {\it infection time} as
\[
t_i\hI := \inf\{s\in[0,T] : x_i(s)=\rI\}.
\]
We define \emph{type} of vertex $i$ at time $t$ as
\begin{equation}\label{eq:deftype}
{y}_i(t) :=
\begin{cases}
    -1 &\hbox{ if } x_i(t) = \rS, \\
    t - t_i\hI &\hbox{ if } x_i(t) = \rI, \\
    T+1 &\hbox{ if } x_i(t) = \rR,
\end{cases}
\end{equation}
where $t - t_i\hI$ is the length of time that vertex $i$ has been infected for. We then define the \textit{empirical type process}, $(F_n(t;\cdot))_{t \in [0,T]}$, through 
\begin{equation}\label{eq:emptypeprocess}
    F_n(t; y) := \dfrac{1}{n} \sum_{i=1}^n \mathbf{1}\{y_i(t) \leq y\}, \quad y \in \mathbb{R}.
\end{equation}

%%%%%%

\subsubsection*{Vertex and edge dynamics}
For each infected vertex $i$, we denote by ${\mathscr I}(\cdot)$ its {\it infectivity}, where ${\mathscr I}:[0,\infty)\to[0,1]$  is a continuous deterministic function.
\begin{itemize}
    \item[$\circ$] \emph{Vertex dynamics.} Let $\cN_i\hI(t)$ be the set of infected neighbors of vertex $i$ at time $t$. At any time $t \in [0,T]$, $x_i(t)$ transitions from $\rS$ to $\rI$ at rate 
    \[
    \frac{\lambda}{n} \sum_{j \in \mathcal{N}_i\hI(t)} {\mathscr I}(y_j(t)),
    \]
    and $x_i(t)$ makes a transition from $\rI$ to $\rR$ at a (normalised) rate of $1$.
    \item[$\circ$] \emph{Edge dynamics.} At any time $t \in [0,T]$, $e_{ij}(t)$ transitions from 0 to 1 (i.e., from inactive to active) at rate
    \[
    \begin{cases}
        \gamma \,\pi_{\rm SS}(F_n(t; \cdot)), \quad &\text{ if  } x_i(t)=x_j(t)=\rS, \\
        \gamma \,\pi_{\rm SI}(y_i(t), F_n(t; \cdot)), \quad &\text{ if  } x_i(t)=\rI, \; x_j(t)=\rS, \\
        \gamma \,\pi_{\rm SI}(y_j(t), F_n(t; \cdot)), \quad &\text{ if  } x_i(t)=\rS, \; x_j(t)=\rI, \\
        \gamma \,\pi_{\rm II}(y_i(t), y_j(t), F_n(t; \cdot)), \quad &\text{ if  } x_i(t)=x_j(t)=\rI,
    \end{cases}
    \]
    and $e_{ij}(t)$ transitions from 1 to 0 (i.e., from active to inactive) at the same rate with $\gamma \,\pi_{s}(\cdot)$ replaced by $\gamma\,(1-\pi_{s}(\cdot))$, with $s\in\{{\rm SS}, {\rm SI}, {\rm II}\}$. When $x_i(t)$ transitions from $\rI$ to $\rR$ we suppose that vertex $i$ resamples adjacent edges with the initial probability $p_0$, after which point these edges are static (i.e., they no longer turn on and off).
\end{itemize}
Note that in our setup the functions $\pi_s{(\cdot)}$ can depend on $F_n(t;\cdot)$, and therefore on the proportion of susceptible, infected and recovered vertices, and also on the distribution of the times which infected vertices have been infected for. We impose a Lipschitz continuity condition on the functions $\pi_{\rm II}{(\cdot)}$. Specifically, we suppose that there exists $L_{\rm II}<\infty$ such that
\[
|\pi_{\rm II}(a_1,b_1,F_1) - \pi_{\rm II}(a_2,b_2,F_2)| \leq L_{\rm II}[|a_1-a_2| +|b_1-b_2|+d_L(F_1,F_2)]
\]
for all $a_1,a_2,b_1,b_2>0$ and all distribution functions $F_1$ and $F_2$, where $d_L$ is the  L\'evy metric defined in \eqref{eq:Levy}. We also impose analogous Lipschitz continuity assumptions on $\pi_{\rm SS}$ and $\pi_{\rm SI}$. 

%%%%%

\subsubsection*{Graphons}

For any $t \in [0,T]$, let $h^{G_n(t)}$ be a function from $[0,1]^2$ to $[0,1]$ which is characterized by
\begin{equation}\label{eqn:empdefn}
h^{G_n(t)}(x,y) := e_{\lceil nx \rceil ,\lceil ny \rceil }(t).
\end{equation}
The function $h^{G_n(t)}$ is referred to as \emph{empirical graphon} associated with $G_n(t)$ and encodes the adjacency matrix of $G_n(t)$ (see Figure \ref{fig:graphon}).

%%%%%%%%%%%%%%%%%%%%%%%%%%%%%%%%%%%%%%%%%%%%%%%%%%%%%%%%%%%%%%%
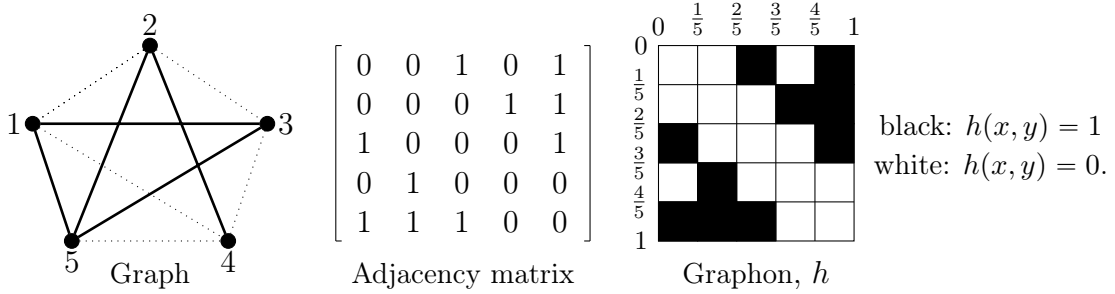
\begin{figure}
\begin{center}
	\begin{tikzpicture}[scale=0.26]
		\path (0,0) node[scale=0.5,draw,shape=circle,fill=black] (p0) {}
		(-6,-4) node[scale=0.5,draw,shape=circle,fill=black] (p1) {}
		(-4,-10) node[scale=0.5,draw,shape=circle,fill=black] (p2) {}
		(4,-10) node[scale=0.5,draw,shape=circle,fill=black] (p3) {}
		(6,-4) node[scale=0.5,draw,shape=circle,fill=black] (p4) {};
		\node[anchor=west, left] at (-6,-4) {{$1$} };
		\node[anchor=west, above] at (0,0) {{$2$} };
		\node[anchor=west, below] at (-4,-10) {{$5$} };
		\node[anchor=west, below] at (4,-10) {{$4$} };
		\node[anchor=west, right] at (6,-4) {{$3$} };
		
		\node[anchor=west, above] at (0,-13) {{\small Graph} };
		\node[anchor=west, above] at (16,-13) {{\small Adjacency matrix} };
		\node[anchor=west, above] at (31,-13) {{\small Graphon}, $h$ };
		\node[anchor=west, above] at (43,-5.5) {\small black: $h(x,y)=1$};
		\node[anchor=west, above] at (43,-7.5) {\small white: $h(x,y)=0$.};
		
		\draw[line width=1pt] 
		(p0) -- (p2)
		(p0) -- (p3)
		(p1) -- (p2)
		(p1) -- (p4)
		(p2) -- (p4);
		\draw[dotted, line width=0.4pt] 
		(p0) -- (p1)
		(p0) -- (p1)
		(p0) -- (p3)
		(p0) -- (p4)
		(p1) -- (p3)
		(p2) -- (p3)
		(p3) -- (p4);
		
		\draw (9.5,0) -- (9.5,-10);
		\draw (22.5,0) -- (22.5,-10);
		\draw (9.5,0) -- (9.8,0);
		\draw (9.5,-10) -- (9.8,-10);
		\draw (22.5,0) -- (22.2,0);
		\draw (22.5,-10) -- (22.2,-10);
		
		\node[anchor=west, right] at (10,-1) {0};
		\node[anchor=west, right] at (12.5,-1) {0};
		\node[anchor=west, right] at (15,-1) {1};
		\node[anchor=west, right] at (17.5,-1) {0};
		\node[anchor=west, right] at (20,-1) {1};
		
		\node[anchor=west, right] at (10,-3) {0};
		\node[anchor=west, right] at (12.5,-3) {0};
		\node[anchor=west, right] at (15,-3) {0};
		\node[anchor=west, right] at (17.5,-3) {1};
		\node[anchor=west, right] at (20,-3) {1};
		
		\node[anchor=west, right] at (10,-5) {1};
		\node[anchor=west, right] at (12.5,-5) {0};
		\node[anchor=west, right] at (15,-5) {0};
		\node[anchor=west, right] at (17.5,-5) {0};
		\node[anchor=west, right] at (20,-5) {1};
		
		\node[anchor=west, right] at (10,-7) {0};
		\node[anchor=west, right] at (12.5,-7) {1};
		\node[anchor=west, right] at (15,-7) {0};
		\node[anchor=west, right] at (17.5,-7) {0};
		\node[anchor=west, right] at (20,-7) {0};
		
		\node[anchor=west, right] at (10,-9) {1};
		\node[anchor=west, right] at (12.5,-9) {1};
		\node[anchor=west, right] at (15,-9) {1};
		\node[anchor=west, right] at (17.5,-9) {0};
		\node[anchor=west, right] at (20,-9) {0};

		\draw [draw=black] (26,-10) rectangle (36,0);
		
		\node[anchor=west, above] at (26,0) {\footnotesize{$0$} };
		\node[anchor=west, above] at (28,0) {\footnotesize{$\frac{1}{5}$} };
		\node[anchor=west, above] at (30,0) {\footnotesize{$\frac{2}{5}$} };
		\node[anchor=west, above] at (32,0) {\footnotesize{$\frac{3}{5}$} };
		\node[anchor=west, above] at (34,0) {\footnotesize{$\frac{4}{5}$} };
		\node[anchor=west, above] at (36,0) {\footnotesize{$1$} };
		
		\node[anchor=west, left] at (26,0) {\footnotesize{$0$} };
		\node[anchor=west, left] at (26,-2) {\footnotesize{$\frac{1}{5}$} };
		\node[anchor=west, left] at (26,-4) {\footnotesize{$\frac{2}{5}$} };
		\node[anchor=west, left] at (26,-6) {\footnotesize{$\frac{3}{5}$} };
		\node[anchor=west, left] at (26,-8) {\footnotesize{$\frac{4}{5}$} };
		\node[anchor=west, left] at (26,-10) {\footnotesize{$1$} };
		
		\draw [fill= black, draw=black] (30,-2) rectangle (32,0);
		\draw [fill= black, draw=black] (34,-2) rectangle (36,0);
		\draw [fill= black, draw=black] (32,-4) rectangle (34,-2);
		\draw [fill= black, draw=black] (34,-4) rectangle (36,-2);
		\draw [fill= black, draw=black] (26,-6) rectangle (28,-4);
		\draw [fill= black, draw=black] (34,-6) rectangle (36,-4);
		\draw [fill= black, draw=black] (28,-8) rectangle (30,-6);
		\draw [fill= black, draw=black] (26,-10) rectangle (28,-8);
		\draw [fill= black, draw=black] (28,-10) rectangle (30,-8);
		\draw [fill= black, draw=black] (30,-10) rectangle (32,-8);
		
		\draw (28,0) -- (28,-10);
		\draw (30,0) -- (30,-10);
		\draw (32,0) -- (32,-10);
		\draw (34,0) -- (34,-10);
		
		\draw (26,-2) -- (36,-2);
		\draw (26,-4) -- (36,-4);
		\draw (26,-6) -- (36,-6);
		\draw (26,-8) -- (36,-8);
		
	\end{tikzpicture} 
\end{center}
\caption{\label{fig:graphon} \small Illustration of the relation between graph, adjacency matrix, and graphon.}
\end{figure}
%%%%%%%%%%%%%%%%%%%%%%%%%%%%%%%%%%%%%%%%%%%%%%%%%%%%%%%%%%

Empirical graphons belong to the space of \textit{graphons} $\mathscr{W}$, containing all functions $h: [0,1]^2 \mapsto [0,1]$ such that $h(x,y)=h(y,x)$ for all $(x,y) \in [0,1]^2$.
We endow $\mathscr{W}$ with the \emph{cut distance}:
\[
d_\square (h_1, h_2) := \sup_{S,T \subseteq [0,1]} \left| \int_{S \times T}{\rm d}x \,{\rm d}y \,[h_1(x,y) - h_2(x,y)] \right|, \quad h_1, h_2 \in \mathscr{W}.
\]
The space $\mathscr{W}$ is used to establish limiting results for random graphs; we establish our main result in the space of $\mathscr{W}$-valued paths.

%%%%%%%%%%%%%%%%%%%%%%%%%%%%%%%%%%%%%%%%%%%%%%%%%%%%%%%%%%%
\begin{figure}
\includegraphics[width=0.32\textwidth]{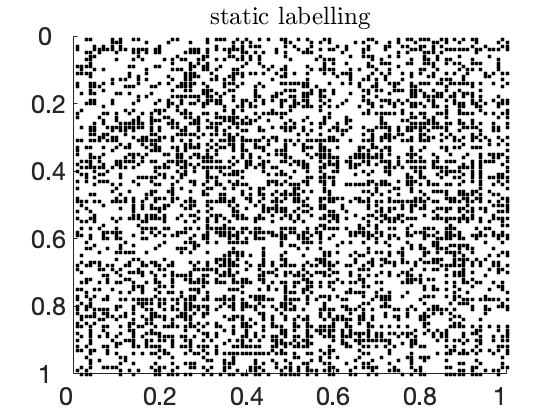}
\includegraphics[width=0.32\textwidth]{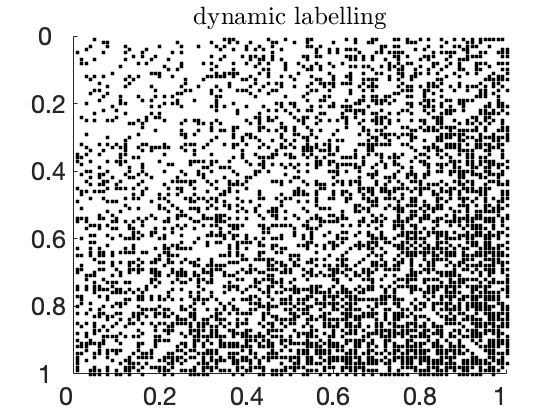}
\includegraphics[width=0.32\textwidth]{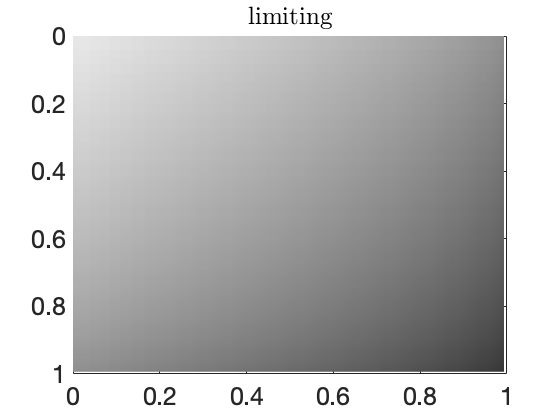}
\caption{\label{fig:label} \small Two graphons with different labellings (left and middle panels), plus the limiting graphon when the labelling of the middle panel has been used (right panel).}
\end{figure}
%%%%%%%%%%%%%%%%%%%%%%%%%%%%%%%%%%%%%%%%%%%%%%%%%

We need to carefully choose how to label the vertices, since $d_\square(h_1, h_2)$ can depend on the labeling convention. For example, in Figure \ref{fig:label} the left and center empirical graphons represent the same graph up to a relabeling of vertices, but only the center graphon is close to the right limiting graphon in the cut distance. 
Throughout this paper we choose to label the vertices such that
\[
y_1(t) \leq y_2(t) \leq \dots \leq y_n(t), \quad  \text{for all $t \in [0,T]$},
\]
with ties broken arbitrarily. This means that susceptible vertices have a lower label than infected vertices, infected vertices have a lower label than recovered vertices, and infected vertices that have been infected for a short time have a lower label than those that have been infected for a long time. This also implies that the labels of the vertices change over time.

%%%%%%

\subsubsection*{Candidate limit} 

In FLLNs, the limiting object can be informally understood as capturing the system's dynamics in the limit of large system size. This deterministic limit, often referred to as the {\it fluid limit}, is typically characterized by a system of differential equations. Below, we outline the procedure for identifying this candidate limit in the context of our system.

We let $(F(t;\cdot))_{t\in[0,T]}$ be a time-varying distribution function which can be thought of as a candidate limit of $(F_n(t;\cdot))_{t \in [0,T]}$ as $n \to \infty$. For any $t \in [0,T]$, $F(t;y)$ is absolutely continuous on $y \in [0,t)$ and has point masses at $-1$, $t$, and $T+1$, with no probability mass elsewhere. We characterize $F$ through differential equations; however, in order to write them in a compact form we require additional notation. First, let $f\sI(t;y):=\frac{\rm d}{{\rm d}y}F(t;y)$ for $y \in [0,t)$, which can be interpreted as the limiting density of the amount of time that infected individuals have been infected for at time $t$, and
\begin{equation}\label{eqn:pdefs}
\begin{aligned}
p\sS (t) &:= F(t;-1) -\lim_{s\uparrow -1} F(t;s), \\
p\sI(t)&:=q_0e^{-t} + \int_0^{t-} {\rm d}y\,f\sI(t;y), \\
p\sR(t)&:=F(t;T+1)-\lim _{s\uparrow T+1}F(t;s),
\end{aligned}
\end{equation}
which can be interpreted as the limiting proportion of susceptible, infected, and recovered individuals at time $t$.

Next, let $H$ be an edge connection probability function, defined by
\begin{align*}
H(t; u &,v, (F(s; \cdot))_{s \in [0,t]}) \\
&:= \mathbb{P}(e_{ij}(t) = 1 \mid y_i(t)=u, y_j(t)=v, (F_n(s;\cdot))_{s \in [0,t]}=(F(s; \cdot))_{s \in [0,t]});
\end{align*}
we remark that this probability is defined more rigorously as an integral of the objects $\pi_{\rm SS}$, $\pi_{\rm SI}$, and $\pi_{\rm II}$ that we introduced in Equation \eqref{eq:H}. Finally let 
\begin{equation}\label{eq:rateI}
\cJ(t) := \mathcal I(t;F(t;\cdot)) 
:= \displaystyle\int_0^t  \dd F(t;u)\,
H(t;-1,u,F(t;\cdot))\, {\mathscr I}(u),
\end{equation}
where $\lambda \cJ(t)$ can be interpreted as the limiting rate (as $n \to \infty$) at which susceptible individuals become infected at time $t$. The function $F$ is characterized by the initial conditions 
\begin{equation}\label{eq:init}
p\sS(0) = 1 - q_0,
\qquad
p\sI(0) = q_0,
\end{equation}
with the initial infection-age distribution concentrated at $u=0$ (that is,
$
f\sI(0;u) = q_0\,\delta_0(u)$),
while the updating equations are
    \begin{align}\label{eq:systPDE}
\dfrac{\partial}{\partial t} p\sS(t)
&= -\lambda \cJ(t)\, p\sS(t), \notag \\
\dfrac{\partial}{\partial t} f\sI(t;u)
+ \dfrac{\partial}{\partial u} f\sI(t;u)
&= - f\sI(t;u),
\qquad u>0, \\
f\sI(t;0)
&= \lambda \cJ(t)\, p\sS(t), \notag \\
\dfrac{\partial}{\partial t} p\sR(t)
&= p\sI(t), \notag
\end{align}
where $f\sI(t;u)$ denotes the density of infected individuals with infection age $u$ at time $t$, and
$p\sI(t) = \smash{\int_0^\infty f\sI(t;u)\,{\rm d}u}$. Due to the initially infected individuals, there is a probability mass at type $t$ for all $t\in[0,T]$, and hence $f_\rI(t;u)$ does not exist when $u=t$. This technical issue can be resolved in a standard way by introducing the measure-valued notation $F_\rI(t;\,{\rm d}u)$ and treating the point mass at $t$ separately. Since this modification is entirely straightforward and does not affect any of the subsequent arguments, we suppress it in the notation to streamline the presentation.

Note that \eqref{eq:systPDE} can be written (in a more involved form) in terms of $F$ and the model parameters defined above, i.e., $p_0$, $q_0$, $\lambda$, $\gamma$, ${\mathscr I}(\cdot)$, $\pi_{\rm SS}(\cdot)$, $\pi_{\rm SI}(\cdot,\cdot)$, and $\pi_{\rm II}(\cdot,\cdot,\cdot)$. Moreover, $F$ can be recovered from the solution to \eqref{eq:systPDE} by letting, for $t\in[0,T]$,
\begin{equation}\label{eq:limitingF}
F(t; y) = \begin{cases}
0, \qquad&y<-1, \\
p\sS(t), \qquad &y \in [-1,0), \\
p\sS(t) + \int_0^y\,{\rm d}u\,f\sI (t;u), \qquad &y\in[0,t), \\
p\sS(t) + p\sI(t), & y\in [t,T+1), \\
1, &y \geq T+1.
\end{cases}
\end{equation}

We also need to construct a candidate limit for the graphon valued process $(h^{G_n(t)})_{t \in [0,T]}$ as $n \to \infty$. To this end, we let $\bar F$ denote the generalized inverse of $F$, which is formally defined as
\[
\bar F(t;x) := \inf\{u : F(t;u) > x\}.
\]
For $t \in [0,T]$, let the candidate limit $g^{[F]}(t;\cdot) \in \mathscr{W}$ be given via
\begin{equation}\label{eq:graphonprocess}
g^{[F]}(t;x,y) := H(t;\bar{F}(t;x),\bar{F}(t;y), (F(t;\cdot)_{t\in[0,T]})).
\end{equation}

%%%%%%

\subsection{Main results}

First we establish that the candidate limit $(F(t;\cdot))_{t\in[0,T]}$ is well-defined. Note that, given \eqref{eq:graphonprocess}, this also implies that $(g^{[F]}(t;\cdot))_{t \in [0,T]}$ is well-defined.

\begin{theorem}\label{thm:PDEsolutions}
There is a unique solution $(F(t;\cdot))_{t \in [0,T]}$ to the system of differential equations characterized by \eqref{eq:systPDE} with initial condition \eqref{eq:init}. In particular, for all $t\in [0,T]$, $F(t;\cdot)$ in \eqref{eq:limitingF} follows from~(i)
\[
p\sS(t) = (1-q_0) \exp\left({-\lambda\int_0^t  \dd s\,\cJ(s)}\right)
\]
and (ii)~$f_I(t;u)$, with $u\in[0,t)$, being the unique solution of the fixed-point equation
\[
f\sI(t;u) = \eee^{-t}f\sI(0;u-t) + \lambda\int_0^t \,\dd s \,\eee^{-(t-s)} \,\cJ(s) \,p\sS(s) .
\]
\end{theorem}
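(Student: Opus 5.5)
The plan is to recast \eqref{eq:systPDE} as a fixed-point problem for the scalar infection-rate function $\cJ(\cdot)$ on $[0,T]$ and to apply the Banach fixed-point theorem, first on a short interval and then iterating to cover $[0,T]$.

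\emph{Step 1: solve the system for a given rate.} Fix a continuous $\cJ:[0,T]\to[0,1]$; the bound $\cJ\le1$ holds because $H,{\mathscr I}\in[0,1]$. The first equation of \eqref{eq:systPDE} is a linear ODE with explicit solution $p\sS(t)=(1-q_0)\exp(-\lambda\int_0^t\cJ(s)\,\dd s)$, which is (i). The transport equation $\partial_t f\sI+\partial_u f\sI=-f\sI$ with boundary datum $f\sI(t;0)=\lambda\cJ(t)p\sS(t)$ is solved by the method of characteristics, giving the formula in (ii). Along characteristics $u-t=\text{const}$ starting from $t=0$ one gets the initial mass transported with factor $\eee^{-t}$. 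For $u<t$ the characteristic enters through the boundary at time $s=t-u$, so $f\sI(t;u)=\eee^{-u}\lambda\cJ(t-u)p\sS(t-u)$. The integral form written in (ii) is the Duhamel representation of the same solution. Finally $p\sR$ is obtained by integrating $p\sI$. Hence $F=\Phi[\cJ]$ is an explicit, well-defined map via \eqref{eq:limitingF}. Uniqueness of the transport solution, for a given boundary flux, is standard in the class of weak (mild) solutions.

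\emph{Step 2: close the loop.} The rate must then satisfy $\cJ(t)=\mathcal I(t;\Phi[\cJ](t;\cdot))$. Here $H$ itself depends on the history $(\Phi[\cJ](s;\cdot))_{s\le t}$ through \eqref{eq:H}. For fixed $u,v$ and a given path of $F$, $H$ solves a linear ODE in $t$: the on/off rates $\gamma\pi$ and $\gamma(1-\pi)$ give
\[
\partial_t H=\gamma\bigl(\pi-H\bigr),
\]
taken along the path of types, with a reset to $p_0$ at recovery. Define $\Psi[\cJ](t):=\int_0^t \dd\Phi[\cJ](t;u)\,H(t;-1,u,\Phi[\cJ])\,{\mathscr I}(u)$ on $C([0,T_0])$ with the sup norm.

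\emph{Step 3: contraction estimates.} Show the following chain of Lipschitz bounds.
\begin{itemize}
\item $\cJ\mapsto\Phi[\cJ]$ is Lipschitz from $C([0,T_0])$ into paths of distribution functions under $\sup_t d_L$. Using the explicit formulas, both the change in $p\sS$ and the total-variation change in $f\sI$ are bounded by $C\lambda\int_0^t|\cJ_1-\cJ_2|$; this also controls $d_L$.
\item By the Lipschitz hypotheses on $\pi_{\rm SS},\pi_{\rm SI},\pi_{\rm II}$ and Gronwall applied to the linear ODE for $H$, one gets $|H_1-H_2|\le C\gamma\int_0^t d_L(F_1(s),F_2(s))\,\dd s$.
\item The outer integral against $\dd F$ is controlled by the total-variation distance of the infected densities. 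This works because the integrand $H\,{\mathscr I}$ is bounded by $1$ and $H(t;-1,u,\cdot)$ is Lipschitz in $u$, which follows from the Lipschitz property of $\pi_{\rm SI}$ in the type plus Gronwall; continuity of ${\mathscr I}$ suffices for measurability.
\end{itemize}
Together these give $\|\Psi[\cJ_1]-\Psi[\cJ_2]\|_{\infty,[0,t]}\le C\int_0^t\|\cJ_1-\cJ_2\|_{\infty,[0,s]}\,\dd s$. Either a Picard/Gronwall argument or a weighted norm $\sup_t\eee^{-Kt}|\cdot|$ then gives a contraction on all of $[0,T]$ at once, yielding existence and uniqueness of $\cJ$, and hence of $F$.

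\emph{Main obstacle.} The hardest step is the first item of Step 3, bounding the dependence of $\int \dd F\,H\,{\mathscr I}$ on $F$. It has two sources of difficulty. The measure $\dd F$ is being compared while the integrand also changes, and the point mass of initially infected individuals at $u=t$ moves with $t$. I would handle this by splitting $F$ into that explicit atom $q_0\eee^{-t}\delta_t$, which is identical for every $\cJ$, and the absolutely continuous part. For the latter, the explicit formula reduces total-variation differences to differences of $\cJ$. One also has to justify that $\Phi[\cJ]$ solves \eqref{eq:systPDE} in the mild (characteristics) sense, which is the sense in which uniqueness is claimed.
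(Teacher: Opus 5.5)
Your route is correct and differs from the paper's. The paper applies Banach's theorem directly to the density $f\sI$ in $C([0,T]\times[-1,T+1])$ with the sup norm. It simply \emph{assumes} that $f\mapsto\cJ_f$ is Lipschitz with some constant $L$, obtains the contraction constant $\tfrac12\lambda LT^2$, and then restarts the argument on successive time intervals. You instead solve the transport equation explicitly for a given rate and take the scalar function $\cJ\in C([0,T];[0,1])$ as the unknown. This buys three things:
\begin{itemize}
\item The Lipschitz property that the paper postulates is actually derived from the hypotheses on $\pi_{\rm SS},\pi_{\rm SI},\pi_{\rm II}$ through the explicit formula \eqref{eq:H}. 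Comparing the measures $\dd F$ in total variation needs only $0\le H{\mathscr I}\le 1$, so the Lipschitz dependence of $H$ on $u$ that you mention is not needed.
\item The causal (Volterra) structure gives a contraction on all of $[0,T]$ at once in a weighted norm, which removes the restart step.
\item The atom $q_0\eee^{-t}\delta_t$ and the jump of $f\sI$ across $u=t$ are handled explicitly. They do not fit in the space of continuous functions used in the paper.
\end{itemize}

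One claim in your Step 1 is wrong, however. Your characteristic formula $f\sI(t;u)=\eee^{-u}\lambda\cJ(t-u)p\sS(t-u)$ for $u<t$ is correct. The formula displayed in (ii) is not its Duhamel representation, though. For $u\in[0,t)$ the term $f\sI(0;u-t)$ vanishes, and the remaining integral does not depend on $u$ at all. It equals $\int_0^t f\sI(t;u)\,\dd u$, the proportion of individuals infected after time $0$ who are still infected at time $t$, not the density at age $u$. For example, with $\cJ p\sS\equiv c$ the true density is $\lambda c\,\eee^{-u}$, while (ii) gives $\lambda c(1-\eee^{-t})$.

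The formula becomes a correct Duhamel formula only if the integrand carries the kernel $\delta_0(u-(t-s))$, that is, if the boundary flux is treated as a source term $\lambda\cJ(t)p\sS(t)\delta_0(u)$. The paper's proof makes the same slip: its ODE along characteristics has a source term that the interior equation does not have, and its operator $\mathcal T$ inherits it. Your argument therefore proves existence and uniqueness for \eqref{eq:systPDE} together with (i) and a corrected version of (ii). You should state (ii) in that corrected form rather than assert that the printed formula is equivalent.
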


Next we establish convergence of the random processes $(F_n(t;\cdot))_{t \in [0,T]}$ and $(h^{G_n(t)})_{t \in [0,T]}$ to their respective candidate limits $(F(t;\cdot))_{t \in [0,T]}$ and $(g^{[F]}(t;\cdot,\cdot))_{t \in [0,T]}$, as $n\to\infty$. To state this result, we let $(\mathcal{M}, d_L)$ denote the space of probability distributions equipped with the L\'evy metric (which induces the weak topology; see \eqref{eq:Levy}). In the sequel, denote by $D((\mathcal{M},d_L), [0,T])$ (resp.\ $D((\mathscr{W},d_\square), [0,T])$) the Skorokhod space of $\mathcal{M}$-valued (resp.\ $\mathscr{W}$-valued) paths on $[0,T]$. {Here and in the sequel,  `$\Rightarrow$' denotes convergence in distribution. }

\begin{theorem}\label{thm:graphonconv}
    As $n \to \infty$,  on $D((\mathcal{M},d_L),[0,T])$,
    \begin{equation*}
    (F_n(t; \cdot))_{t \in [0,T]}  \Rightarrow (F(t;\cdot))_{t \in [0,T]}
    \end{equation*}
    and, on $D((\mathscr{W},d_\square), [0,T])$,
    \begin{equation*}
        (h^{G_n(t)}(\cdot,\cdot))_{t \in [0,T]}  \Rightarrow (g^{[F]}(t;\cdot,\cdot))_{t \in [0,T]}.
    \end{equation*}
\end{theorem}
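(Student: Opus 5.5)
The proof follows the mimicking-process strategy of \cite{BBdHM24}, with the global feedback handled through the mechanism of \cite{BdHM22}. It has three steps: build a one-way mimicking process, show it has the right limit, and couple it to the true process.

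\textbf{Step 1: the mimicking process.} Fix the deterministic solution $(F(t;\cdot))_{t\in[0,T]}$ from Theorem~\ref{thm:PDEsolutions}. We construct a process $(\tilde y_i(t),\tilde e_{ij}(t))$ as follows.
\begin{itemize}
\item Each susceptible vertex becomes infected at the deterministic rate $\lambda\cJ(t)$, independently of the graph.
\item Infected vertices recover at rate $1$.
\item Edges flip at rates $\gamma\pi_s(\cdot,F(t;\cdot))$, in which the empirical $F_n$ is replaced by the deterministic $F$.
\end{itemize}
In this process the vertex types are i.i.d.\ and the edges are conditionally independent given the vertex types. This is exactly the one-way dependence setting of \cite{BdHM22}.

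\textbf{Step 2: the limit of the mimicking process.} Since the types are i.i.d., a Glivenko--Cantelli argument gives $\sup_t d_L(\tilde F_n(t),F(t))\to0$ almost surely. A functional version follows from tightness, because jumps have size $1/n$ and the infection ages move continuously. The law of each $\tilde y_i$ solves \eqref{eq:systPDE} by construction. By conditional independence, $\mathbb P(\tilde e_{ij}(t)=1\mid \text{types})=H(t;\tilde y_i(t),\tilde y_j(t),F)$. Hence the ordered empirical graphon is close in cut distance to $g^{[F]}(t)$, using the standard concentration bound for $W$-random graphs (cut distance $O(n^{-1/2})$ with high probability) together with the Lipschitz property of $H$ in the types. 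Uniformity in $t$ comes from a finite time grid plus control of the number of edge flips on short intervals, as in \cite{BdHM22}.

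\textbf{Step 3: coupling the true process to the mimicking process.} Both processes are driven by common Poisson clocks, with thinning by uniform marks. A vertex or edge becomes \emph{mismatched} when the two rates differ and the mark falls between them. There are two sources of rate difference.
\begin{itemize}
\item \emph{Edge rates.} Edge rates differ by at most $L[d_L(F_n,F)+|y_i-\tilde y_i|+|y_j-\tilde y_j|]$, by the Lipschitz assumptions.
\item \emph{Infection rates.} The true rate is $\frac{\lambda}{n}\sum_{j\in\mathcal N_i\hI}\mathscr I(y_j)$, while the mimicking rate is $\lambda\cJ(t)$. The key estimate is that, uniformly over susceptible $i$ and $t\le T$, $\frac1n\sum_j \tilde e_{ij}\mathscr I(\tilde y_j)\mathbf 1\{\tilde x_j=\rI\}$ is within $\epsilon$ of $\cJ(t)$ with high probability. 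This holds because the $\tilde e_{ij}$, $j\ne i$, are conditionally independent given the types, with means $H(t;-1,\tilde y_j,F)$. A Hoeffding bound, a union bound over $n$ vertices and a time grid, and Step 2 then give it.
\end{itemize}
Let $M_n(t)$ be the fraction of mismatched vertices and $E_n(t)$ the fraction of mismatched edges. Then $d_L(F_n,\tilde F_n)\le M_n$ and $d_\square(h^{G_n},h^{\tilde G_n})\le E_n+2M_n$, the latter accounting for relabelling. One shows that $\mathbb E[M_n(t)]$ and $\mathbb E[E_n(t)]$ satisfy a Gronwall-type inequality of the form $\frac{d}{dt}m\le C(m+\epsilon+o(1))$. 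Here the infection-rate error for a matched susceptible vertex is bounded by its own matched-edge sum plus $E_n$ and $M_n$ contributions. Gronwall then gives $\sup_t(M_n+E_n)\to0$ in probability. Combined with Step 2, this yields both convergences in the theorem. Convergence in the Skorokhod topology follows because the limits are continuous and the uniform distance tends to zero.

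\textbf{Main obstacle.} The hardest part is controlling the infection rate of individual susceptible vertices. The true rate depends on that vertex's neighbourhood, and mismatched edges could cluster around specific vertices, so an average bound on $E_n$ does not directly control each vertex. I would handle this as in \cite{BBdHM24}: I would declare a vertex \emph{bad} if its number of mismatched adjacent edges exceeds $\delta n$, and bound the fraction of bad vertices by Markov's inequality, $E_n/\delta$. I would then run the Gronwall argument on the pair $(M_n,\ \text{bad fraction})$. The global dependence through $F_n$ is only a perturbation, since it enters Lipschitz-continuously via $d_L(F_n,F)\le M_n+d_L(\tilde F_n,F)$.
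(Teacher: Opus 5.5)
Your architecture is the paper's. It has a one-way mimicking process with deterministic infection rate $\lambda\cJ(t)$ and $F$ in the edge rates, its limit via \cite{BdHM22}, a coupling through shared Poisson clocks and uniform marks, and a closed recursion (Gronwall, in your case) for the mismatch counts as in \cite[Lemma 3.6]{BBdHM24}. Your conditional Hoeffding bound on the mimicking infection pressure, uniform in $i$, is a more direct substitute for the paper's cut-distance control of the vertex-averaged discrepancy. Two of your steps, however, fail as written.

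\emph{Relabelling.} The inequality $d_\square(h^{G_n},h^{\tilde G_n})\le E_n+2M_n$ is false. Both graphons use type-sorted labels, and the $M_n n$ mismatched vertices sit at different places in the two sorted lists. So the labels of possibly all matched vertices are shifted, by up to $M_n n$ places, and $d_\square$, unlike $\delta_\square$, is not relabelling invariant. For example, moving one vertex from the end of the list to the middle shifts half of the labels by one; if the edges join exactly the odd- and even-labelled vertices, this already creates cut distance about $1/16$.

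What is true (breaking ties by vertex index in both processes) is the following. There is a measure-preserving bijection $\pi$ of $[0,1]$, permuting blocks of length $1/n$ and moving all but a set of measure $M_n$ by at most $M_n$, with $\|h^{G_n}-h^{\tilde G_n}\circ(\pi\times\pi)\|_{L_1}\le E_n+2M_n$. You then have to pass through the limit, using that $d_\square$ is invariant under a common measure-preserving map:
\[
d_\square\bigl(h^{G_n},g^{[F]}\bigr)\le E_n+2M_n+d_\square\bigl(h^{\tilde G_n},g^{[F]}\bigr)+\bigl\|g^{[F]}\circ(\pi\times\pi)-g^{[F]}\bigr\|_{L_1}.
\]
The last term vanishes because $g^{[F]}(t;\cdot,\cdot)$ can be approximated in $L_1$ by continuous functions, uniformly in $t$ by compactness of $\{g^{[F]}(t)\}_{t\in[0,T]}$ in $L_1$. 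This uses regularity of the limit, not a property of arbitrary graphs. The paper's Lemma~\ref{lmm:coupling} does not address this point either.

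\emph{The ``main obstacle''.} Clustering of mismatched edges is harmless, and your patch makes things worse. Only the expected number of new vertex mismatches enters the recursion, i.e.\ $\lambda\,\mathbb E\sum_i|a_i-\cJ(t)|$ over matched susceptible $i$. Here $a_i$ is the true infection pressure and $\tilde a_i$ its mimicking counterpart. Since $|a_i-\tilde a_i|\le\frac1n\#\{j:e_{ij}\neq\tilde e_{ij}\text{ or }y_j\neq\tilde y_j\}$, this sum is at most $n(E_n+M_n)+\sum_i|\tilde a_i-\cJ(t)|$, however the mismatched edges are distributed. The paper uses exactly this, by averaging over a uniformly chosen susceptible vertex.

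Suppose instead, as your threshold suggests, you bound good vertices by $\delta$ and bad ones by $1$, using Markov's inequality. You then get $m'\le C(m/\delta+\delta+\epsilon)$, whose Gronwall bound multiplies the additive $\delta$ by $e^{CT/\delta}$ and does not vanish as $\delta\downarrow0$. Drop the split and close directly with
\[
\tfrac{d}{dt}\mathbb E M_n\le\lambda\bigl(\mathbb E E_n+\mathbb E M_n+\epsilon_n\bigr),
\qquad
\tfrac{d}{dt}\mathbb E E_n\le C\bigl(\mathbb E M_n+\mathbb E\,d_L(\tilde F_n,F)\bigr).
\]
Count mismatch events cumulatively, so that the bound at time $T$ controls the supremum.
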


\section{Illustrations}\label{sec:num}

In this section, we illustrate our findings. In the first subsection, we focus on the case with only local feedback, presenting expressions for the basic reproduction number $R_0$ and the final fraction of susceptible individuals, and discussing the epidemic peak. The second subsection presents a model with global feedback, which shows that multiple infection peaks can arise.

%%%%%%%

\subsection{Dynamics without global feedback} 

In this subsection, we consider a simpler setting in which global feedback is absent. More specifically, we make the assumption that
\begin{equation}\label{eqn:ngf}
\pi_{\rS\rS}(\cdot)=p_0, \quad \text{and} \quad \pi_{\rS\rI}(u,F) =\pi_{\rS\rI}(u).
\end{equation}
Note that $\pi_{\rS\rS}(\cdot)=p_0$ implies that susceptible individuals are connected with probability $p_0$ {\it at any time $t$}, and $\pi_{\rS\rI}(u,F) =\pi_{\rS\rI}(u)$ entails that there is no global co-evolutionary feedback mechanism.
Under these assumptions, we can follow standard intuitive calculations to derive compact expressions for the basic reproduction number $R_0$ and the final proportion of susceptible individuals $p\sS(\infty)$ as $n$ becomes large {(cf.\ \cite[Section 1]{BR13})}. 

Note that, {under \eqref{eqn:ngf}}, an individual who was infected $u$ time units ago is connected to any susceptible individual with probability
\[ p_{\rS\rI}(u) := p_0\eee^{-\gamma u} + \int_0^u {\rm d}s \,\gamma\eee^{-\gamma s} \,\pi_{\rS\rI}(u-s). \]
The probability that the infected individual infects any susceptible individual throughout their lifetime is then
\[
\bar p_{\rS\rI,n}(u):=\int_0^\infty {\rm d}u  \, p_{\rS\rI}(u) \,\frac{\lambda}{n}\mathscr{I}(u)\,\eee^{-u}.
\]
Considering the expected number of infections caused by a single infected individual among a large number (i.e., $n \to \infty$) of susceptible individuals, we then obtain the basic reproduction number 
\begin{equation}\label{eq:R0}
R_0 = \int_0^\infty {\rm d}u \,  p_{\rS\rI}(u)\, \lambda\mathscr{I}(u)\,\eee^{-u}.
\end{equation}
To write down a formula for the final value $p\sS(\infty)$, note that an initially susceptible individual does not ever get infected with probability $p\sS(\infty)/(1-q_0)$, and if it remains susceptible, then it does not get infected by any of the $n(1-s(\infty))$ individuals that get infected at some point. Consequently
\begin{align*}
\frac{p\sS(\infty)}{1-q_0} &= \lim_{n\to\infty}\left(1-\bar p_{\rS\rI,n}(u)\right)^{n(1-p_sS(\infty))}\\&=\lim_{n \to \infty} \left( 1 - \int_0^\infty {\rm d}u  \, p_{\rS\rI}(u) \,\frac{\lambda}{n}\,\mathscr{I}(u)\,\eee^{-u} \right)^{n(1-p\sS(\infty))} \\
&=\lim_{n \to \infty} \left( 1 - \frac{R_0 (1 -p\sS(\infty))}{n (1-p\sS(\infty))} \right)^{n(1-p\sS(\infty))} =\eee^{-R_0(1-p\sS(\infty))},
\end{align*}
which implies that $p\sS(\infty)$ {is the unique solution in $[0,1]$ of the fixed-point equation}
\begin{equation}\label{eqn:psinf}
p\sS(\infty) = (1-q_0)\,\eee^{-R_0(1-p\sS(\infty))}.
\end{equation}

Let $p_{{\rS},n}(t)$ denote the proportion of susceptible individuals at time $t$ in a system with $n$ individuals. As a consequence of Theorem \ref{thm:graphonconv}, it holds that if $p\sS(\infty)=\lim_{t \to\infty} \lim_{n \to \infty} p_{\rS,n}(t)$, then $p\sS(\infty)$ satisfies \eqref{eqn:psinf}. It should be noted, however, that the limiting proportion of susceptible individuals is most naturally expressed as $p\sS(\infty)=\lim_{n \to \infty}\lim_{t \to\infty} p_{\rS,n}(t)$. The required interchange of limits is generally justified by dominating the epidemic process with a subcritical branching process after some large time $t$ (see, for instance, \cite[Section 6.3]{BHI24}). Here, we encounter two challenges: (1) the vertex types are continuous, which would likely require working with a dominating branching process with a continuous type space, and (2) the dynamic nature of the edges complicates the construction of such a dominating branching process. Addressing these technical challenges is beyond the scope of the present work.

We are also interested in the {\it peak} of the epidemic, defined as  \[i_{\max}:=\max_{t \geq 0} p\sI(t).\] A classical result for the conventional SIR model is that \begin{equation}\label{imax-classical}
i_{\max}=1 - R_0^{-1} + R_0^{-1}\log R_0^{-1}.
\end{equation} 
In our setting, deriving an analogous formula is challenging (see also \cite[Section 3.2]{HR24} for a related analysis). Nevertheless, the findings below illustrate how $\gamma$, which defines the timescale of the edge dynamics, influences $i_{\max}$.

To this end, we first analyze the effect of $\gamma$ on the basic reproduction number $R_0\equiv R_0(\gamma)$, assuming $\mathscr{I}(\cdot)\equiv1$. From \eqref{eq:R0}, by direct computations we find
\[
R_0(\gamma) = \dfrac{\lambda}{\gamma+1}p_0 + \lambda\gamma \int_0^\infty \dd u \,\eee^{-u} \left( \int_0^u \dd s\, \eee^{-\gamma s} \,\pi_{\rS\rI}(u-s) \right) =: A(\gamma) + B(\gamma);
\]
observe that in the static setting (in which $\gamma=0$) this reduces to $R_0(0)=\lambda p_0$. After applying Fubini--Tonelli and the change of variables $v=u-s$ in the inner integral, we can write
\[
B(\gamma) = \lambda\gamma\int_0^\infty \dd s\, \eee^{-\gamma s} \left( \int_0^\infty \dd v \,\eee^{-(s+v)} \,\pi_{\rS\rI}(v) \right)
= \lambda\gamma \,C \int_0^\infty \dd s \,\eee^{-(\gamma+1)s} = \dfrac{\lambda\gamma\, C}{\gamma +1},
\]
where
\[
C:= \int_0^\infty \dd v \,\eee^{-v} \pi_{\rS\rI}(v).
\]
Note that $C<\infty$ because $\pi_{\rS\rI}(\cdot)$ was assumed to be Lipschitz continuous. It thus follows that
\[
R_0(\gamma) = \dfrac{\lambda(p_0+\gamma C)}{\gamma+1},
\]
and as a consequence
\[
R_0'(\gamma) = \dfrac{\lambda(C-p_0)}{(\gamma+1)^2}.
\]
We therefore conclude that $R_0$ is increasing in $\gamma$ if and only if $C>p_0$. 
Note that through Equation \eqref{eqn:psinf} we see that $p_\rS(\infty)$ is monotonically increasing in $R_0$, and hence $p_\rS(\infty)$ is monotonically increasing in $\gamma$ if and only if $C>p_0$. The condition $C>p_0$ is simplest to understand when $\pi_{\rS\rI}(\cdot)$ is constant (say $\pi_{\rS\rI}$), where it reduces to $\pi_{\rS\rI} > p_0$. In this case, once an individual becomes infected, the probability that they are connected to a susceptible individual, $p_{\rS\rI}(u)$, increases with the time since infection $u$. If the edge re-sampling rate $\gamma$ increases, then the increase in $p_{\rS\rI}(u)$ is sped up, leading to an increase in $R_0$ (see \eqref{eq:R0}). Equivalent reasoning also explains why $R_0$ is decreasing in $\gamma$ when $C \equiv\pi_{\rS\rI} < p_0$.

We now turn to the peak of the epidemic, $i_{\rm max}$. First observe that if the classical result \eqref{imax-classical} were to hold for our model, then $i_{\rm max}$ would be increasing in $R_0$. Second, since the number of infected individuals at any given time cannot exceed the total fraction that will eventually be infected, we obtain the upper bound $i_{\rm max} \leq 1 - p\sS(\infty)$, where the right-hand side is likewise increasing in $R_0$.
Taken together with the preceding discussion, these observations suggest that $i_{\rm max}$ is monotonically increasing in $\gamma$ if and only if $C > p_0$. As we are unable to derive a counterpart of \eqref{imax-classical} for the present model, we cannot establish this claim rigorously. Nevertheless, the conclusion is strongly supported by the numerical results shown in Figure~\ref{fig:differentpeaks}.

%%%%%%%%%%%%%%%%%%%%%%%%%%%%%%%%%%%%%%%%%%%%%%%%%%%%%%%
\begin{figure}%[htbp]
\includegraphics[width=7.7cm]{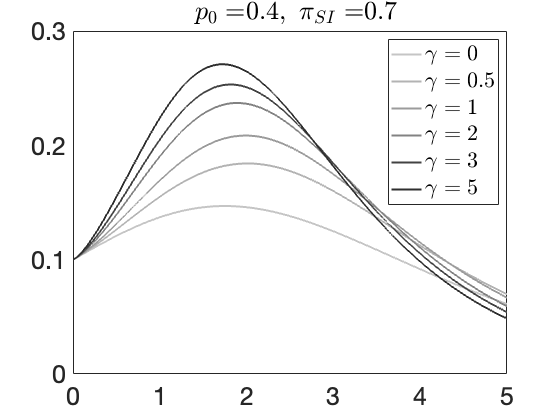}
\includegraphics[width=7.7cm]{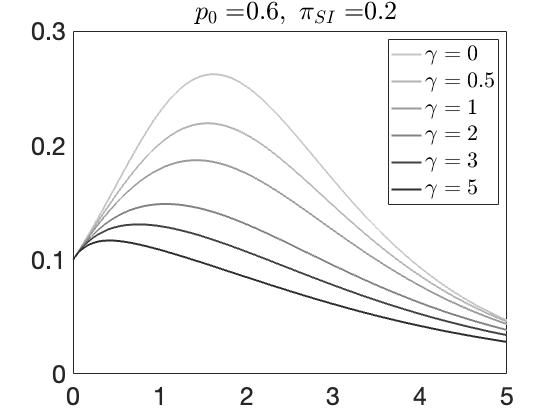}
\caption{\small Each solid curve represents the numerical solution of the system of PDEs for $\pi_{\rS\rS}=p_0$, $\lambda=4$, $q_0=0.1$, and varying $p_0,\pi_{\rS\rI},\gamma$.}
\label{fig:differentpeaks}
\end{figure}
%%%%%%%%%%%%%%%%%%%%%%%%%%%%%%%%%%%%%%%%%%%%%%%%%%%%%%%%

\subsection{Multiple peaks with global feedback}

In this subsection, we present an example that illustrates how global feedback between the epidemic state and network connectivity can produce multiple infection peaks. To capture this behavior, we consider a model in which network connectivity evolves in response to the overall infection level. For $a>0$, we define
\[
\phi(t) := 
\displaystyle \int_0^a \dd s\, f\sI(t; s) ,
\]
which can be equivalently written as
$\phi(t) = 
F_n(t;a)-F_n(t;0).
$
Here, $\phi(t)$ denotes the proportion of individuals who were infected at most $a$ time units ago and who are still infectious at time $t$. It may be interpreted as the population’s perceived overall `threat level'. Since $\phi(t)$ depends on infections that occurred in the past (namely, within the preceding $a$ time units), the perceived threat level generally lags behind the actual threat level. The latter is most naturally quantified by the current rate of infection (see \eqref{eq:rateI} for the limiting rate as $n \to \infty$). For $0<\phi_1<\phi_2<\infty$, define the piecewise linear function
\[
d(\phi) = 
\begin{cases}
0.1 &\hbox{ if } \phi \leq \phi_1, \\
0.1 + 0.8 \dfrac{\phi - \phi_1}{\phi_2-\phi_1} &\hbox{ if } \phi_1<\phi<\phi_2, \\
0.9 &\hbox{ if }  \phi \geq \phi_2,
\end{cases}
\]
which is a behavioral control function, representing how strongly the population is distancing: $d(\phi)\approx 0.9$ (resp.\ $d(\phi)\approx0.1$) means that the system is in `distancing mode' (resp.\ `normal mode'). We then put
\begin{equation}\label{eqn:dpphi}
\begin{array}{ll}
\pi_{\rS\rS}(\phi) &:= (1-d(\phi))\, p_{\rS\rS}^{\rm norm} + d(\phi) \,p_{\rS\rS}^{\rm dist}, \\
\pi_{\rS\rI}(\phi) &:= (1-d(\phi)) \,p_{\rS\rI}^{\rm norm} + d(\phi) \,p_{\rS\rI}^{\rm dist},
\end{array}
\end{equation}
for given numbers ${\smash p_{\rS\rS}^{\rm norm}}$, ${\smash p_{\rS\rS}^{\rm dist}}$, ${\smash p_{\rS\rI}^{\rm norm}}$, and ${\smash p_{\rS\rI}^{\rm dist}}$, where it is envisaged that ${\smash p_{\rS\rS}^{\rm norm}>p_{\rS\rS}^{\rm dist}}$ and  ${\smash p_{\rS\rI}^{\rm norm}>p_{\rS\rI}^{\rm dist}}$. 
We have thus constructed a simplified model of `behavioural response' (e.g., by government interventions or voluntary distancing), enforcing that contact rates drop when people perceive high infection.

In the numerical illustration in Figures \ref{fig:doublepeak} and \ref{fig:functionphi}, we have chosen the parameters: 
\[
\begin{aligned}
p_0 &= 0.1, & q_0 &= 0.05,  & \phi_1 &= 0.24, & \phi_2 &= 0.28, & \gamma &= 20, & \lambda &= 10, \\
p_{\rS\rS}^{\rm norm} &= 0.9, &
p_{\rS\rS}^{\rm dist} &= 0.3, &
p_{\rS\rI}^{\rm norm} &= 0.6, &
p_{\rS\rI}^{\rm dist} &= 0.01, &
\pi_{\rm I\rm I} &= 0.3, & a &= 1 .
\end{aligned}
\]

%%%%%%%%%%%%%%%%%%%%%%%%%%%%%%%%%%%%%%%%%%%%%%%%%%%%%%%%%%
\begin{figure}[htbp]
\includegraphics[width=5.1cm]{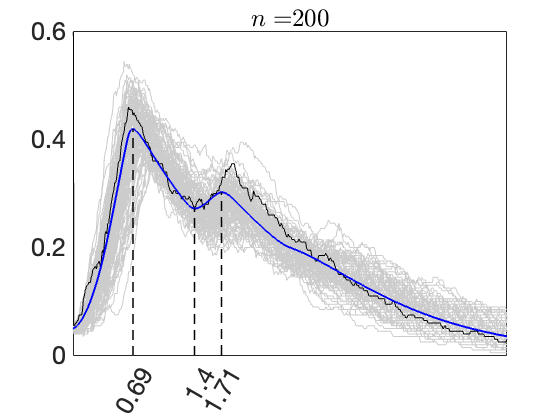}
\includegraphics[width=5.1cm]{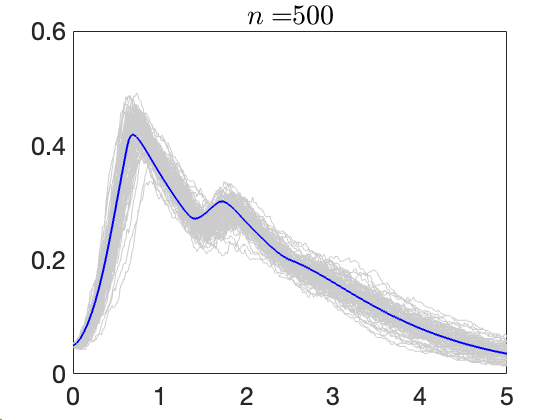}
\includegraphics[width=5.1cm]{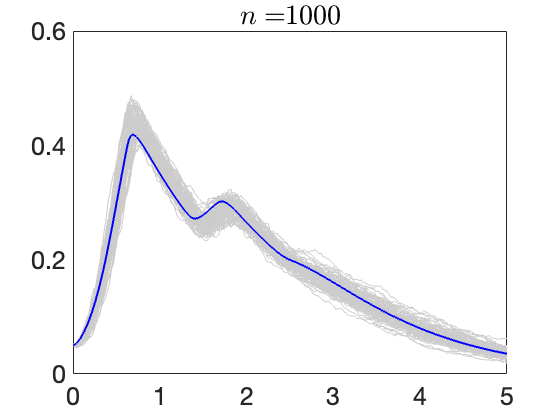}
\includegraphics[width=5.1cm]{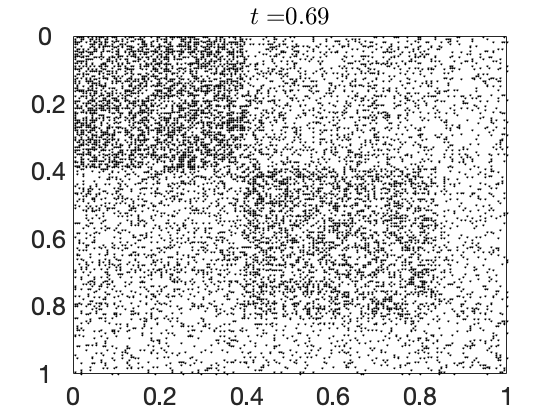}
\includegraphics[width=5.1cm]{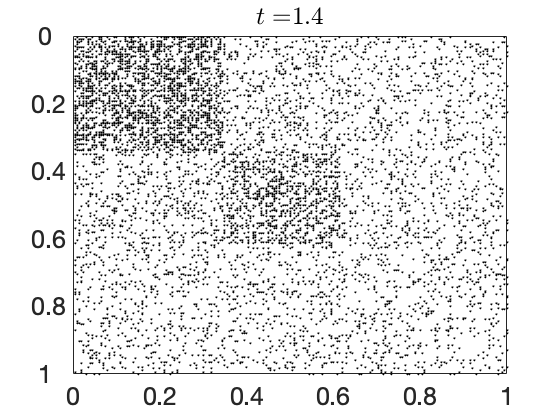}
\includegraphics[width=5.1cm]{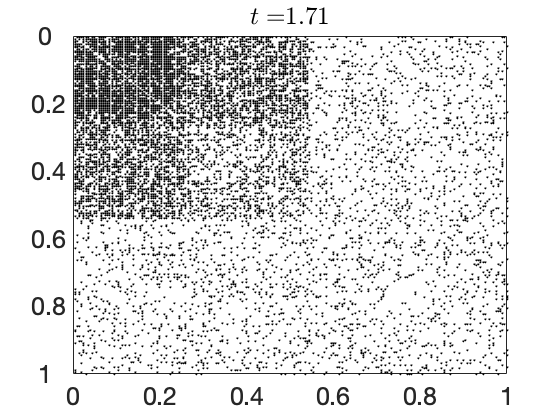}
\includegraphics[width=5.1cm]{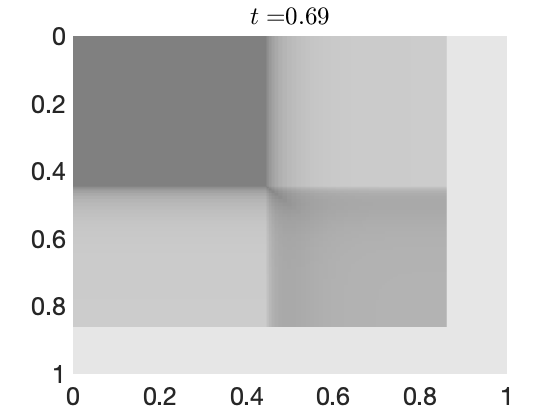}
\includegraphics[width=5.1cm]{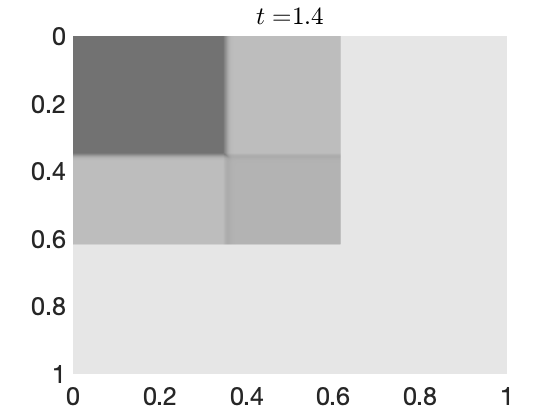}
\includegraphics[width=5.1cm]{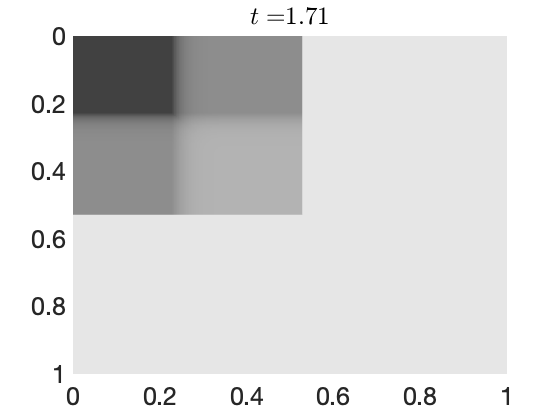}
\includegraphics[width=5.1cm]{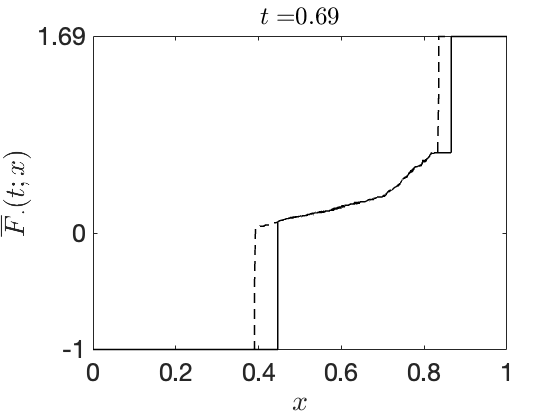}
\includegraphics[width=5.1cm]{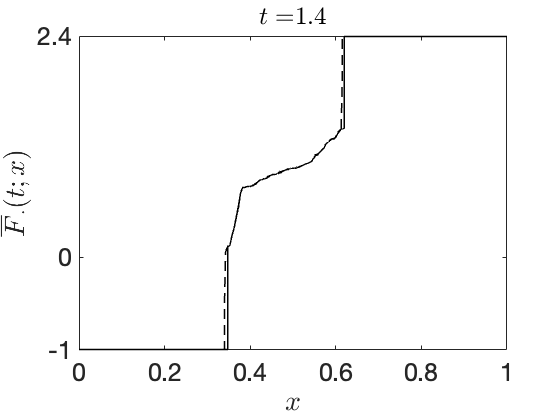}
\includegraphics[width=5.1cm]{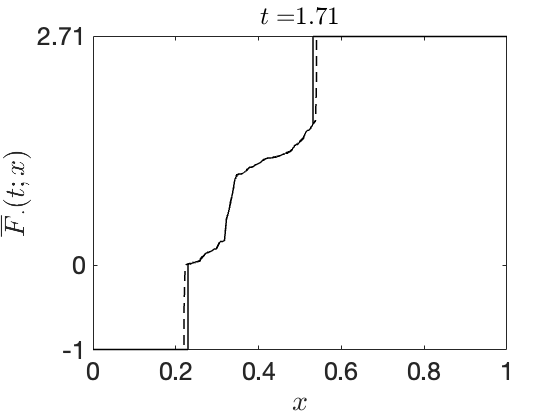}
\caption{\small {The grey trajectories in the top row, from left to right, correspond to simulations of 100 stochastic SIR trajectories of the fraction of infected individuals for $n=200,500,1000$, respectively. The solid blue curves are the numerical solution of the system of PDEs with the same parameters.} {Observe that the `cloud' of simulated trajectories shrinks with $n$, reflecting the convergence to the deterministic limiting path. The second row displays the empirical graphons corresponding to the black trajectory ($n=200$) when $t=0.69$ (first peak), $t=1.4$ (valley between the first two peaks) and $t=1.71$ (second peak); a dot represents an edge.} The labels of the vertices are updated dynamically so that they are ordered lexicographically, i.e., the vertices with state $\rS$ have lower labels than the vertices with state $\rI$, which in turn have lower labels than the vertices with state $\rR$, and then by increasing type. The third row displays the corresponding FLLN. The bottom row displays $\bar{F}_{{200}}(t;x)$ (dashed line) corresponding to the black trajectory and $\bar{F}(t;x)$ (solid line) when $t=0.69,1.4,1.71$ for $x\in[0,1]$.}
\label{fig:doublepeak}
\end{figure}
%%%%%%%%%%%%%%%%%%%%%%%%%%%%%%%%%%%%%%%%%%%%%%%%%%%%%%%

%%%%%%%%%%%%%%%%%%%%%%%%%%%%%%%%%%%%%%%%%%%%%%%%%%%%%%%%%%
\begin{figure}
\includegraphics[width=10cm]{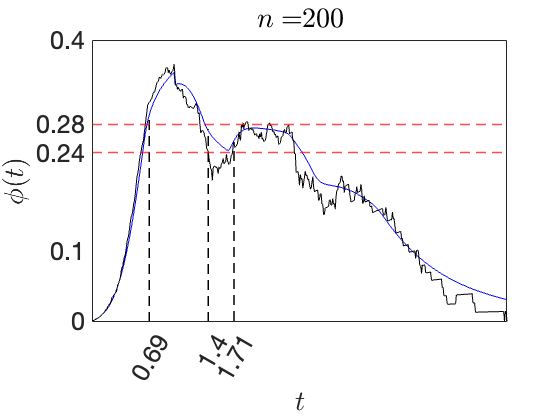}
\caption{\small An illustration of $\phi(t)$ for $t\in[0,5]$: the black and the blue lines correspond to the empirical black trajectory and the smooth blue curve, respectively, of the top left panel in Figure \ref{fig:doublepeak}.}
\label{fig:functionphi}
\end{figure}
%%%%%%%%%%%%%%%%%%%%%%%%%%%%%%%%%%%%%%%%%%%%%%%%%%%%%%%

These figures display the functional law of large numbers for the epidemic states and network that was established in Theorem \ref{thm:graphonconv}, and simulations with finite $n$. The explanation below focuses on the large population limit, and describes how to interpret the figures. The following phases can be distinguished:
\begin{itemize}
    \item[$\circ$] \emph{Initial rise:} In the top row of Figure \ref{fig:doublepeak} we observe that the proportion of infected individuals is increasing for $t \in[0,0.69]$. As displayed in Figure \ref{fig:functionphi}, the process is in normal mode for the vast majority of this time interval. Consequently, there is a high proportion of active SI edges through which the epidemic can spread, and still a relatively high proportion of susceptible individuals which can be infected. Indeed, the initial proportion of susceptible individuals is $1-p_0=0.9$ and, by observing the bottom left plot of Figure \ref{fig:doublepeak}, we see that at $t=0.69$ the proportion of susceptible individuals is just over 0.4 (i.e., the proportion of type $-1$ vertices). 

    \item[$\circ$] \emph{First  peak:} Just before $t=0.69$ the process enters distancing mode, and after $t=0.69$ the proportion of infected individuals starts to decline. In distancing mode, when resampled, the edge probability between $\rS\rI$ individuals is $\pi_{\rS\rI}(\phi)=0.069$, compared to $\pi_{\rS\rI}(\phi)=0.541$ in normal mode (see Equation \eqref{eqn:dpphi}). The adjustment to distancing mode is not instantaneous, however, since edges are resampled at the finite rate $\gamma=20$. Consequently, in the left middle two panels in Figure~\ref{fig:doublepeak}, where we observe the empirical graphon and limiting graphon at $t=0.69$, the graph is losing $\rS\rI$ edges rapidly. From the bottom left plot of Figure~\ref{fig:doublepeak}, we see that the proportions of susceptible and infected individuals are approximately 0.43 and $0.4= 0.83-0.43$ respectively (refer to $x$-axis); hence these $\rS \rI$ edges correspond to $(x,y) \in [0,0.43]\times[0.43,0.83] \; \cup \; [0.43,0.83]\times [0,0.43]$ in the left second and third row of Figure~\ref{fig:doublepeak}.

    \item[$\circ$] {\emph{Decline from first peak and first dip:} Figure \ref{fig:functionphi} demonstrates that even once the proportion of infected individuals starts to decline, the perceived danger level $\phi(t)$ is increasing (i.e., for $t \in [0.69,1]$). This is because $\phi(t)$ is calculated using past infections, and hence the perceived danger level lags behind the actual threat level, i.e., the current rate of infection. This can also be observed after the first dip ($t=1.4$), where the perceived danger level continues to decline even though the proportion of infected individuals is increasing, and after the second peak, where the perceived threat level continues to increase even though the proportion of infected individuals is declining. It is these delayed dynamics that lead to the two infection peaks that we observe in the top row of Figure \ref{fig:doublepeak}.}

    \item[$\circ$] {\emph{Second peak:} At $t=1.71$, the proportion of infected individuals reaches its second peak. In the bottom right of Figure \ref{fig:doublepeak}, for $x \in [0.23, 0.56]$, we can observe the types (i.e., the time since infection) of the currently infected individuals. Here we see that there have been two waves of infection: of those currently infected, a proportion of approximately $0.1=0.33-0.23$ ({refer to $x$-axis}) of the total population were infected in the second wave which occurred between $0$ and $0.31$ time units ago (refer to $y$-axis), approximately $0.04$ were infected during the dip which occurred between $0.31$ and $1.02$ time units ago, and the remaining $0.19=0.56-0.37$ were infected during the first wave which occurred between $1.02$ and $1.71$ time units ago. }

    \item[$\circ$] {\emph{Final decline:} After $t=1.71$ the proportion of infected individuals declines monotonically. This is because there are now very few susceptible individuals left to infect. Indeed, the bottom right of Figure \ref{fig:doublepeak} shows that the proportion of susceptible individuals is just over 0.2 at $t=1.71$. Moreover, at $t=1.71$, the proportion of infected individuals is no longer increasing despite the relatively high proportion of active $\rS\rI$ edges in comparison to $t=0.69$ and $t=1.4$, which can be observed by comparing the plots in the third row of Figure \ref{fig:doublepeak}. The continued decline in the proportion of infected individuals after $t=1.71$ is therefore because the density of $\rS\rI$ edges is never high enough to compensate for the low number of susceptible individuals to infect.}
    
\end{itemize}
%%%%%%%%%%

\section{Road map}\label{sec:roadmap}

In this section, we outline a four-step procedure leading to the proof of our main results. Each step is presented systematically, as this approach may also serve as a useful framework for analyzing related co-evolutionary systems.

The main idea is that we construct a process, referred to as the {\it  mimicking process}, that exhibits {\it one-way dependence}: the edge dynamics depend on the states of the vertices, but not vice versa. This structural simplification makes the mimicking process significantly more tractable than the original co-evolutionary model, which features two-way dependence. In the first three steps of the procedure outlined below we prove our FLLN claims for the mimicking process. In the fourth step we show that the two processes remain `sufficiently close', allowing the FLLN that we establish for the mimicking process to carry over to the co-evolutionary process.

%%%%%%

\smallskip
\noindent {\it Step 1: Define the mimicking process, and establish convergence of its empirical type process.}
A key role in our proofs is played by the mimicking process, a process with one-way dependence that shares the same edge dynamics as the co-evolutionary model but differs in its vertex dynamics. It is expressed via the evolving random graph process $(G_n^*(t))_{t\in[0,T]}$ that corresponds to the empirical type process $(F_n^*(t;\cdot))_{t\in[0,T]}$. For this process, we define the generalized type analogously to the original model, now denoted by  $X_i^*(t)=(x_i^*(t), y_i^*(t))$. In the mimicking process at any time $t\in[0,T]$, $x^*_i(t)$ transitions from state $\rS$ to state $\rI$ at rate $\lambda\,\cJ(t)$, as given in \eqref{eq:rateI}. The infected and recovered vertices behave as in the original model. We define the mimicking process in full detail in Section \ref{sec:coupling}.

We then focus on establishing the convergence of the {empirical type process} of the mimicking process, $(F^*_n(t;\cdot))_{t \in [0,T]}$ as $n\to\infty.$ The following property plays a key role. 

\begin{property}\label{P1}
    The empirical type process satisfies a stochastic-process limit: $F^*_n\Rightarrow F$ as $n\to\infty$ on $D(\cM([0,T]\cup\{T+1\}),[0,T])$. 
\end{property}

To establish this property in our setting,  we characterize $F$ via the generator of the process $(x_i^*(t), y_i^*(t))_{t\in[0,T]}$. It takes the form
\begin{equation}\label{eq:generator}
\begin{array}{ll}
(\cL_t f)(x,y) = \big( \lambda \cJ(t)\,\mathbf{1}_{\{x=\rS\}} + \mathbf{1}_{\{x=\rI\}}\big)[f(x',y)-f(x,y)]+ b(x,y)\,\dfrac{\partial}{\partial y} f(x,y),
\end{array}
\end{equation}
where $x'$ is the state of the selected vertex after changing its state $x$ at time $t$. Here $b(x,y)$ is characterized as follows: (i)~$b(x,y)=0$ for $x\in\{\rS,\rR\}$, because the type of a susceptible or recovered vertex does not change in time if its state does not, and (ii)~$b(\rI,y)=1$. Indeed, if vertex $i$ has state $\rI$ during the entire time interval $(t,t+\dd t)$, then
\[
y_i^*(t+\dd t) = y_i^*(0) + t + \dd t - t_i^\rI  = y_i^*(t) + \dd t,
\]
so that the time derivative of $y_i^*(t)$ equals 1. This gives rise to the Kolmogorov forward equations in \eqref{eq:systPDE} that characterizes $F$, and that verify Property \ref{P1} above.

\noindent 
{\it {Step 2: Express the edge-connection probability, corresponding to the mimicking process, in terms of the types.}} The goal is develop expressions (i)~for the probability that in the mimicking process there is an active edge at time $t$, in terms of the types of vertices $i$ and $j$, and (ii)~for the path of the empirical distribution $(F^*_n(s;\cdot))_{s\in[0,t]}$ up to time $t$. In this context, the key property is the following. 

\begin{property}\label{P2} At any time $t$, edge $ij$ is active with probability 
\[
H(t; y^*_i(t),  y^*_j(t), (F^*_n(s;\cdot))_{s\in[0,t]}),
\]
conditionally independently of all the other edges given $(y^*_i(s), y_j^*(s), F^*_n(s;\cdot))_{s\in[0,t]}$. \end{property}

Given the paths of $x_i^*(\cdot)$ and $x_j^*(\cdot)$, the probability that edge $ij$ is active at time $t$ is
\[
p_{ij}(t) = \eee^{-\gamma t}p_0 + \gamma \int_0^t \dd s \, \eee^{-\gamma s} \pi_{x_i^*(t-s),x_j^*(t-s)}(\cdot)=\eee^{-\gamma t}p_0 + \gamma \int_0^t \dd s \, \eee^{-\gamma (t-s)} \pi_{x_i^*(s),x_j^*(s)}(\cdot).
\]
To verify Property \ref{P2}, we need to show that this probability can be expressed only in terms of the types of vertices $i$ and $j$ at time $t$ (i.e.,  $x^*_i(t)$ and $x^*_j(t) = S$) and $(F^*_n(s;\cdot))_{s\in[0,t]}$. To this end, we first observe that if $x^*_i(t) = x^*_j(t) = \rS$, then 
\[
p_{ij}(t) = \eee^{-\gamma t}p_0 + \gamma \int_0^t \dd s \, \eee^{-\gamma (t-s)} \pi_{\rm SS}(F^*_n(s;\cdot)).
\]
Denote $y^*_{ij,\vee}(t):= y^*_i(t)\vee y_j^*(t)$ and $y^*_{ij,\wedge}(t):= y^*_i(t)\wedge y_j^*(t)$.
If $x^*_i(t) = x^*_j(t) = \rI$, then, using the identities $t_i^\rI  \wedge t_j^\rI  = t- {y}^*_{ij,\vee}(t)$ and $t_i^\rI  \vee t_j^\rI  = t- {y}^*_{ij,\wedge}(t) $, we can write 
\begin{align*}
p_{ij}(t) &= \displaystyle\eee^{-\gamma t}p_0 
+ \gamma \int_{0}^{t_i^\rI  \wedge t_j^\rI } \!\dd s\, 
    \eee^{-\gamma (t-s)} \, \pi_{\mathrm{SS}}(F^*_n(s; \cdot))  \\
&\quad + \gamma \displaystyle\int_{t_i^\rI  \wedge t_j^\rI }^{t_i^\rI  \vee t_j^\rI } \!\dd s\,
    \eee^{-\gamma (t-s)} \, \pi_{\mathrm{SI}}(s - (t_i^\rI  \wedge t_j^\rI ), F^*_n(s; \cdot)) \\
&\quad + \displaystyle\gamma \int_{t_i^\rI  \vee t_j^\rI }^{t} \!\dd s\, 
    \eee^{-\gamma (t-s)} \, \pi_{\mathrm{II}}(s - (t_i^\rI  \wedge t_j^\rI ), s - (t_i^\rI  \vee t_j^\rI ), F^*_n(s; \cdot)) \\
&= \displaystyle\eee^{-\gamma t}p_0 
+ \gamma \int_{0}^{t- {y}^*_{ij,\vee}(t)} \!\dd s\, 
    \eee^{-\gamma (t-s)} \, \pi_{\mathrm{SS}}(F^*_n(s; \cdot)) \\
&\quad + \gamma \displaystyle\int_{t-{y}^*_{ij,\vee}(t)}^{t-y^*_{ij,\wedge}(t) } \!\dd s\, 
    \eee^{-\gamma (t-s)} \, \pi_{\mathrm{SI}}(s-(t-{y}^*_{ij,\vee}(t)), F^*_n(s; \cdot)) \\
&\quad + \gamma \displaystyle\int_{t-t-y^*_{ij,\wedge}(t)}^{t} \!\dd s\, 
    \eee^{-\gamma (t-s)} \,\pi_{\mathrm{II}}(s-(t-{y}^*_{ij,\vee}(t)), s-(t-y^*_{ij,\wedge}(t)), F^*_n(s; \cdot)).
\end{align*}
Similarly, if $x^*_i(t) = \rI$ and $x^*_j(t) = \rS$, then 
\[
\begin{array}{ll}
p_{ij}(t) &= \displaystyle\eee^{-\gamma t}p_0 + \gamma \int_{0}^{t-y^*_i(t)} \dd s \, \eee^{-\gamma (t-s)} \pi_{\rm SS}(F^*_n(s;\cdot)) \\
&\quad + \:\displaystyle\gamma\int_{t-y^*_i(t)}^{t} \dd s \, \eee^{-\gamma (t-s)} \pi_{\rm SI}(s-(t-y^*_i(t)), F^*_n(s;\cdot)).
\end{array}
\]
The case $x^*_i(t) = \rS$ and $x^*_j(t) = \rI$ is fully analogous. Finally, if $x_i(t)=\rR$ or $x_j(t)=\rR$, then evidently $p_{ij}(t) = p_0$.

 In view of the above computations, we can write the probability $p_{ij}(t)$, in terms of $y^*_i(t)$, $y^*_j(t)$ and $(F^*_n(s;\cdot))_{s\in[0,t]}$ only:
\begin{align*}
p_{ij}(t) =&\:
\mathbf{1}{\{y^*_{ij,\vee}(t) = T+1\}}p_0  + \mathbf{1}{\{y^*_{ij,\vee}(t) \neq T+1\}} \,\eee^{-\gamma t}p_0 \:+\\
&\: \mathbf{1}\{y^*_{ij,\vee}(t) = -1 \} \,\gamma \int^t_0 {\rm d}s\,\eee^{-\gamma (t-s)}\pi_{\rm SS}(F_n^*(s; \cdot))\:+ \\
&\: \mathbf{1}\{y^*_{ij,\vee}(t) \in [0,T+1) \}\, \gamma \int^{t-y^*_{ij,\vee}(t)}_0 {\rm d}s\,\eee^{-\gamma(t- s)}\pi_{\rm SS}(F_n^*(s; \cdot))\:+ \\
&\: \mathbf{1}\{y^*_{ij,\wedge}(t) = -1 \} \mathbf{1}\{y^*_{ij,\vee}(t) \in [0,T+1) \} \:\\
&\quad\quad \times \gamma \int_{t-y^*_{ij,\vee}(t)}^t {\rm d}s\,\eee^{-\gamma (t-s)}\pi_{\rm SI}(s-(t-y^*_{ij,\vee}(t)), F_n^*(s; \cdot)) \:+\\
&\: \mathbf{1}\{y^*_{ij,\wedge}(t) \in [0,T+1) \}\mathbf{1}\{y^*_{ij,\vee}(t) \in [0,T+1) \} \\
&\quad\quad \times \gamma \Bigg(\int_{t-(y^*_{ij,\vee}(t))}^{t-(y^*_{ij,\wedge}(t))} {\rm d}s\,\eee^{-\gamma (t-s)}\pi_{\rm SI}(s-(t-y^*_{ij,\vee}(t)), F_n^*(s; \cdot))\:+ \\
&\: \quad \quad \quad\quad \int^{t}_{t-y^*_{ij,\wedge}(t)} {\rm d}s\,\eee^{-\gamma (t-s)}\pi_{\rm II}(s-(t-y^*_{ij,\vee}(t)),s-(t-y^*_{ij,\wedge}(t)), F_n^*(s; \cdot)) \Bigg).
\end{align*}
Consequently, the probability of having an active edge between two vertices with type $u$ and $v$ at time $t$ is given by, with $(u,v)^-:=u\wedge v$ and $(u,v)^+:=u\vee v$,
\begin{align}
H(t; u, v, F^*_n(t; \cdot)) 
&= \mathbf{1}{\{(u,v)^+ = T+1\}}p_0  + \mathbf{1}{\{(u,v)^+\neq T+1\}} \,\eee^{-\gamma t}p_0\:+ \notag \\
    & \mathbf{1}\{(u,v)^+ = -1 \} \,\gamma \int^t_0 {\rm d}s\,\eee^{-\gamma (t-s)}\pi_{\rm SS}(F_n^*(s; \cdot))\:+\notag \\
    & \mathbf{1}\{(u,v)^+) \in [0,T+1) \} \,\gamma \int^{t-(u,v)^+}_0 {\rm d}s\,\eee^{-\gamma(t- s)}\pi_{\rm SS}(F_n^*(s; \cdot)) \notag\\
    & \mathbf{1}\{(u,v)^- = -1 \} \mathbf{1}\{(u,v)^+ \in [0,T+1) \} \:+\notag\\
    &\quad \times \gamma \int_{t- (u,v)^+}^t {\rm d}s\,\eee^{-\gamma (t-s)}\pi_{\rm SI}(s-t+ (u,v)^+, F_n^*(s; \cdot)) \:+\notag\\
& \mathbf{1}\{(u,v)^- \in [0,T+1) \}\mathbf{1}\{(u,v)^+ \in [0,T+1) \} \notag\\
&\quad \times \gamma \Bigg(\int_{t- (u,v)^+}^{t- (u,v)^-} {\rm d}s\,\eee^{-\gamma (t-s)}\pi_{\rm SI}(s-t+ (u,v)^+, F_n^*(s; \cdot)) \:+\notag\\
&\quad \quad \:\:\int^{t}_{t- (u,v)^-} {\rm d}s\,\eee^{-\gamma (t-s)}\pi_{\rm II}(s-t+ (u,v)^+,s-t+ (u,v)^-, F_n^*(s; \cdot)) \Bigg).\label{eq:H}
\end{align}
We have thus verified Property \ref{P2}.

\medskip
\noindent
{\it {Step 3: Establish convergence of the graphon-valued process that corresponds to the mimicking process.}} If $F$ is the limit described in Property \ref{P1}, then the induced reference graphon process (see \eqref{eq:graphonprocess}) is our candidate limit for the graph-valued process $G^*_n(t)$ in the space of graphons. To show this, we use the continuous mapping theorem, for which we need the following property.

\begin{property}
    The map $F\mapsto g^{[F]}$ from $D(\cM([0,T] \cup \{T+1\},[0,T])$ to $D((\mathcal{W},d_{\square}),[0,T])$ is continuous, where $\mathcal{W}$ and $d_{\square}$ are defined in Appendix \ref{appA}.  
\end{property}

\noindent
{\it {Step 4: Extend to the original co-evolutionary model.}} We apply the framework above to the mimicking process, and couple the mimicking process with the co-evolutionary process in such a way that discrepancies during the time interval $[0,T]$ have a sufficiently small probability. This will imply that $h^{G_n(t)}(\cdot, \cdot)\implies g^{[F]}(t;\cdot,\cdot)$ as $n\to\infty$ in the space $D((\mathscr{W},d_\square), [0,T])$

\section{Proofs}\label{sec:proofs}

%%%%%

\subsection{Proof of Theorem \ref{thm:PDEsolutions}} 

The expression for $p\sS(t)$ directly follows from the integration of the first equation in \eqref{eq:systPDE}. We just need to prove that, for any $t\in[0,T]$ and $u\in[-1,T+1]$, there exists a unique solution $f\sI(t,u)$ of the second equation in \eqref{eq:systPDE}. To this end, we first derive the fixed-point equation $f\sI(t,u)$ must satisfy by using the method of characteristics, and then we use the Banach fixed-point theorem to prove existence and uniqueness of the solution.
\medskip

\noindent
{\it Method of characteristics.} The characteristics satisfy $\dd u/\dd t =1$, meaning that $u(t)=u(0) + t$. Along this curve the PDE becomes an ODE for the profile $F(t)=f\sI(t,u(t))$, which reads as
\[
F'(t) + F(t) = \lambda \,\cJ(t) \, p\sS(t),
\]
which is equivalent to
\[
\frac{\dd}{\dd t} \left( \eee^t F(t) \right) = \lambda \eee^t \cJ(t)\, p\sS(t).
\]
This thereby leads to
\[
F(t) = \eee^{-t} F(0) + \lambda \int_0^t \dd s\, \eee^{-(t-s)} \cJ(s) \,p\sS(s), 
\]
which reads as
\[
f\sI(t,u) = \eee^{-t} f\sI(0,u-t) + \lambda \int_0^t \dd s\, \eee^{-(t-s)} \cJ(s) \,p\sS(s). 
\]
Note that the solution is not explicit, but it depends on solving a coupled system along all the characteristics. Indeed, the term $\cJ(s)$ depends on the function $f\sI(s,\cdot)$ itself.

\noindent {\it Contraction property.} Define the space $X=C([0,T]\times[-1,T+1],\mathbb R)$ with the norm
\[
\|f\|_\infty := \sup_{t\in[0,T],\,u\in[-1,T+1]} |f(t,u)|, \qquad f \in X.
\]
Define now the operator $\mathcal T:X\to X$ as
\[
(\mathcal T f)(t,u) := \eee^{-t} f(0,u-t) + \lambda \int_0^t \dd s \,\eee^{-(t-s)} \cJ_f(s)\, p\sS(s),
\]
where $\cJ_f(s) :=  \int_0^s  \dd F_f(s;u)\, H(s;-1,u,F_f(s;\cdot))\, {\mathscr I}(u)$, with $F_f$ defined as in \eqref{eq:limitingF} after replacing $f\sI$ by $f$ (note that also $p\sI(t)$ is obtained after this replacement in \eqref{eqn:pdefs}). A solution $f\sI$ of the starting PDE corresponds then to a fixed point of $f=\mathcal T f$. Letting $f,g\in X$, we find, by using that $\cJ(\cdot)$ is Lipschitz continuous with Lipschitz constant $L$,
\[
\begin{array}{ll}
\|(\mathcal T f)(t,u) - (\mathcal T g)(t,u)\|_\infty &= \displaystyle \sup_{t\in[0,T]} \left|\lambda \displaystyle\int_0^t \dd s\, \eee^{-(t-s)} p\sS(s)\, (\cJ_f(s)-\cJ_g(s))\right| \\
&\leq \lambda L \|f-g\|_\infty \, \displaystyle\sup_{t\in[0,T]} \displaystyle\int_0^t \dd s \, s \eee^{-(t-s)} \\
&\leq \lambda L \|f-g\|_\infty \, \displaystyle\sup_{t\in[0,T]} \displaystyle\int_0^t \dd s \, s, \\
&= K(T) \|f-g\|_\infty,
\end{array}
\]
with $K(T):=\frac{1}{2}\lambda L T^2$, showing that $\mathcal T$ is a contraction for small enough $T$.

\noindent 
{\it Local existence and uniqueness.} First, note that $K(T)>0$. Moreover, if $K(T)<1$, then $\mathcal T$ is a contraction for any $t\in[0,T]$, therefore leading to global existence and uniqueness by the Banach fixed-point theorem. Otherwise, by the same theorem we deduce that there exists a unique solution on $[0,T_0]\times[-1,T+1]$, with $T_0$ the unique positive value such that $K(T_0)=1$.

\noindent
{\it Global existence.} After obtaining the solution on $[0,T_0]$, we use the value at $t=T_0$ as new initial data and repeat the same argument. Since $T<\infty$ and the norm of the solution  can be bounded uniformly in $t$ and $u$ as
\[
\begin{array}{ll}
\displaystyle\sup_{t\in[0,T]}\sup_{u\in[-1,T+1]}|f_\rI(t,u)| &\leq \displaystyle\sup_{u\in[-1,T+1]}|f_\rI(0,u)| + \lambda K(T) < \infty,
\end{array}
\]
we can iterate finitely many times to cover all $t\in[0,T]$. This yields global existence and uniqueness for $(t,u)$ in the whole rectangle $[0,T]\times[-1,T+1]$.

\subsection{Coupling}
\label{sec:coupling}
To prove convergence of the process in the space of graphons, we construct a mimicking process following Steps 1--3 in Section \ref{sec:roadmap} and that is close to the original process in $L_1$ (Step 4).

\medskip
\noindent
{\it Mimicking process.} Suppose that the process $(G_n^*(t))_{t\in[0,T]}$ is characterised by the following dynamics:
\begin{itemize}
    \item $G_n^*(0)$ is an ERRG with connection probability $p_0$.
    \item Each vertex $i$ being susceptible (resp.\ infected) is equipped with an independent rate-$\lambda \cJ^*(t)$ (resp.\ rate-$1$) Poisson clock, with $\cJ^*(t):=\cI(t;f^*\sI(t,\cdot), (F^*(t,\cdot))_{t\in[0,T]})$, where $f^*\sI(t,\cdot) = {\mathbb P}(X_i^*(t)\in(\rI,\cdot))$ and $(F^*(t,\cdot))_{t\in[0,T]}$ is the associated limiting empirical type process. When the clock associated to vertex $i$ rings, the following happens:
    \begin{itemize}
        \item if $x_i(t)=\rS$, then vertex $i$ gets infected;
        \item if $x_i(t)=\rI$, then vertex $i$ recovers and all the adjacent edges to vertex $i$ are resampled according to the initial edge probability $p_0$.
    \end{itemize}
    \item Each edge (not involving any vertex having state $R$) is equipped with an independent rate-$\gamma$ Poisson clock. When the clock associated to edge $ij$ rings, then the edge is active with a probability that depends on the state of the connecting vertices. This probability is defined as follows:
    \begin{itemize}
        \item $\pi_{\rS\rS}(F^*(t,\cdot))$ if both adjacent vertices are susceptible in $G_n^*(t)$;
        \item $\pi_{\rS\rI} (y_i^*(t),F^*(t,\cdot))$ (resp.\ $\pi_{\rS\rI} (y_j^*(t),F^*(t,\cdot))$) if vertex $i$ (resp.\ vertex $j$) is infected and vertex $j$ (resp.\ vertex $i$) is susceptible, where $y_i^*(t)$ denotes the type of vertex $i$ in $G_n^*(t)$ for any $i\in[n]$;
        \item $\pi_{\rI\rI}(y_i^*(t), y_j^*(t),F^*(t,\cdot))$ if both adjacent vertices are infected in $G_n^*(t)$.
    \end{itemize}
\end{itemize}

\begin{lemma}\label{lmm:mimicking}
    The process $\{G_n^*(t))_{t\geq0}\}_{n\in\mathbb{N}}$ satisfies the assumptions of \cite[Theorem 3.10]{BdHM22}, and therefore $h^{G_n^*}\Rightarrow g^{[F^*]}$ as $n\to\infty$ in the space $D((\cW,d_\square),[0,T])$.
\end{lemma}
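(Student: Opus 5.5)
The plan is to verify, one at a time, the structural hypotheses of \cite[Theorem 3.10]{BdHM22}. That theorem concerns graph-valued processes with one-way dependence. Vertices carry i.i.d.\ Markov type processes, and each edge, conditionally on the vertex-type paths and on a deterministic, or asymptotically deterministic, empirical type path, evolves as an independent two-state Markov chain whose on/off rates depend on the types of its endpoints and on the empirical type distribution. Once these hypotheses hold, the conclusion $h^{G_n^*}\Rightarrow g^{[F^*]}$ is exactly what the theorem delivers. We therefore do not reprove any convergence; we only match the mimicking process to that framework.

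\emph{Step 1: vertex dynamics.} We check that the type processes $(x_i^*(t),y_i^*(t))_{t\in[0,T]}$, $i\in[n]$, are i.i.d.\ time-inhomogeneous Markov processes that do not depend on the graph. This holds by construction. The infection rate $\lambda\cJ^*(t)$ is a deterministic function of $t$, since it is built from $f^*\sI$ and $F^*$, which are deterministic objects. Recovery happens at rate $1$, the infection age drifts at speed $1$, and the initial states are independent Bernoulli$(q_0)$. The generator is \eqref{eq:generator} with $\cJ$ replaced by $\cJ^*$. By the Glivenko--Cantelli theorem, applied pathwise in the form used in \cite{BdHM22} (Property \ref{P1}), $F_n^*\Rightarrow F^*$ in $D((\cM,d_L),[0,T])$. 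We will also note that $F^*$ is the unique solution of Theorem \ref{thm:PDEsolutions}, so $F^*=F$, although the lemma only needs $F^*$.

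\emph{Step 2: edge dynamics and regularity.} Conditionally on the vertex paths, the edges evolve independently of one another. Edge $ij$ is resampled at rate $\gamma$ with success probability $\pi_{x_i^*x_j^*}$. The exception is that once an endpoint recovers, the edge is frozen after a single resampling with probability $p_0$; we encode this as the type $T+1$ with ``on'' probability $p_0$, which is already captured by the first indicator in \eqref{eq:H}. The transition rates $\gamma\pi_s$ and $\gamma(1-\pi_s)$ are bounded. They are Lipschitz in the types and in the distribution argument with respect to $d_L$, by the standing assumptions on $\pi_{\rm SS},\pi_{\rm SI},\pi_{\rm II}$. This is the regularity condition required by \cite{BdHM22}. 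The connection probability is then given by the function $H$ of Property \ref{P2}, that is, by \eqref{eq:H}. The initial graph is an Erd\H{o}s--R\'enyi graph independent of the types, as required.

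\emph{Step 3: continuity.} We verify the continuity condition, namely that $F\mapsto g^{[F]}$ is continuous at $F^*$ into $D((\cW,d_\square),[0,T])$. Here we expect the main obstacle, for two reasons. First, $H$ in \eqref{eq:H} has discontinuities across the boundaries between the S, I and R types, at $-1$, $0$ and $T+1$. Second, $F^*(t;\cdot)$ carries atoms at $-1$, $t$ and $T+1$, and the atom at $t$ moves with time. Our first attempt is to show that, away from these atoms, $H(t;u,v,\cdot)$ is Lipschitz in $(u,v)$ and in the path of $F$ (the latter via the Lipschitz bound on the $\pi$'s integrated against $\gamma e^{-\gamma(t-s)}$). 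Convergence of the generalized inverses $\bar F_n^*\to\bar F^*$ holds a.e.\ in $x$, because $F^*(t;\cdot)$ has only finitely many atoms and is continuous elsewhere. Dominated convergence then yields $L^1$, hence cut-norm, convergence, uniformly in $t$. The moving atom at $t$ consists of the initially infected vertices, which all share one type. On that block $H$ is constant, so the atom causes no discontinuity in $x$, only a jump-free shift in time. With these conditions checked, \cite[Theorem 3.10]{BdHM22} applies and gives the lemma.
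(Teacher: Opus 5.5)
Your proposal is correct and follows essentially the same route as the paper. The paper likewise checks the hypotheses of \cite[Theorem 3.10]{BdHM22} through three ingredients and then invokes that theorem: independence of the vertex dynamics in the mimicking process (which gives $F_n^*\Rightarrow F^*$), the type-only edge probability $H$ of Property~\ref{P2}, and continuity of $F\mapsto g^{[F]}$. Your continuity step is more detailed than the paper's bare assertion, and two tidy-ups are worth making: a.e.\ convergence of generalized inverses holds for any weakly convergent sequence without needing finitely many atoms, and the type gaps $(-1,0)$ and $(t,T+1)$ already make the S/I/R ``discontinuities'' of $H$ harmless. Also, uniformity in $t$ should come from a sequential argument with $t_n\to t$ rather than from pointwise dominated convergence alone.
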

\begin{proof}
    To prove the claim, it suffices to verify that \cite[Assumptions 3.1, 3.6–3.7, 3.9]{BdHM22} hold. The dynamics of each vertex in the mimicking process are independent of those of the other vertices. Consequently, the first three properties discussed in Section \ref{sec:roadmap} are satisfied. We may therefore follow the same reasoning as in \cite[Section 2.4.1]{BBdHM24} to establish a FLLN for the empirical type process as $n \to \infty$.
\end{proof}

\begin{lemma}\label{lmm:coupling}
    There exists a coupling of $\{(G_n(t))_{t\in[0,T]}\}_{n\in\mathbb{N}}$ and $\{(G^*_n(t))_{t\in[0,T]}\}_{n\in\mathbb{N}}$ such that, for any $\delta>0$,
    \begin{equation}
        \lim_{n\to\infty} \dfrac{1}{n} \log \mathbb{P} \left( ||\tilde{h}^{G_n(t)} - \tilde{h}^{G^*_n(t)}||_{L_1}>\delta \text{ for some } t\in[0,T] \right)=0.
    \end{equation}
\end{lemma}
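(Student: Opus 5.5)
We construct the coupling on a common probability space carrying all the randomness, following the scheme of \cite{BBdHM24}. The initial Erd\H{o}s--R\'enyi graph and the initial vertex states are shared by the two processes. Each vertex $i$ gets one common uniform variable (or unit-rate Poisson process) that drives its recovery clock. Each edge $ij$ gets one common rate-$\gamma$ Poisson clock, together with i.i.d.\ uniform marks $U$ attached to its rings. When the clock of $ij$ rings, the edge is set active in $G_n$ iff $U\le\pi_s(\cdot)$ evaluated at the current configuration of $G_n$, and in $G_n^*$ iff $U\le\pi_s(\cdot)$ evaluated at the configuration of $G_n^*$ with $F^*$.

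Infections are coupled through a Poisson random measure on $[0,T]\times[0,\infty)$ for each vertex. A susceptible vertex $i$ becomes infected in $G_n$ at the first atom $(t,z)$ with $z\le \frac{\lambda}{n}\sum_{j\in\cN_i\hI(t)}\mathscr I(y_j(t))$, and in $G_n^*$ at the first atom with $z\le\lambda\cJ(t)$. Under this coupling, a vertex is \emph{discrepant} once its infection time differs between the two processes. Because recovery times are then shared, a vertex with equal infection times keeps equal types forever. An edge is discrepant if its state differs between the two processes.

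Since $\|\tilde h^{G_n(t)}-\tilde h^{G_n^*(t)}\|_{L_1}$ is bounded by the fraction of discrepant vertices (each contributes at most $2/n$ of the labelled mass after ordering by type, up to relabelling effects controlled by the ordering) plus the fraction of discrepant edges among pairs of non-discrepant vertices, it suffices to show two things, each with probability $1-e^{-o(n)}$ uniformly in $t\in[0,T]$. First, the discrepant-vertex fraction $D_n(t)$ stays below $\delta$. Second, the edge discrepancies are small.

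For the edges, note that between non-discrepant vertices the edge marks coincide. The rules differ only through $d_L(F_n(s),F^*_n(s))\le D_n(s)$ and $d_L(F^*_n(s),F(s))$, both handled by Lipschitz continuity of the $\pi_s$. So the expected fraction of discrepant edges is at most $L\int_0^t[D_n(s)+d_L(F_n^*(s),F(s))]\,\dd s$. Concentration holds because, conditional on the vertex paths, the edges evolve independently, so Hoeffding or Bernstein bounds give $e^{-cn^2}$ tails.

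For the vertices, the rate at which a new vertex becomes discrepant at time $t$ is bounded by $n$ times the difference of the two infection rates. We split
\[
\Big|\tfrac{1}{n}\textstyle\sum_{j\in\cN_i\hI(t)}\mathscr I(y_j(t))-\cJ(t)\Big| \le \big|\text{same with $G_n$ replaced by $G_n^*$}-\cJ^*(t)\big|+\text{(discrepancy term)}.
\]
The first term, averaged over susceptible $i$, is a cut-norm-type functional of $h^{G_n^*}-g^{[F]}$ tested against the bounded function $\mathscr I$. It is $o(1)$ with superexponentially small failure probability, by Lemma \ref{lmm:mimicking} and Property \ref{P2}: conditional independence of the edges gives concentration of the neighbourhood sums, and Property \ref{P1} controls $F_n^*\to F$. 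The discrepancy term is bounded by $C(D_n(t)+E_n(t))$, where $E_n(t)$ is the discrepant-edge fraction.

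Comparing the counting process of new discrepancies with a Poisson process of rate $n[C(D_n+E_n)+\varepsilon_n]$, and combining this with the edge bound, yields a Gr\"onwall inequality $D_n(t)\le \varepsilon_n'+C\int_0^tD_n(s)\,\dd s$ on the good event. Hence $\sup_t D_n(t)\le\varepsilon_n'e^{CT}\to0$, and exponential martingale (Chernoff) bounds for Poisson counts give failure probability $e^{-cn}$.

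The main obstacle is the first term above. We need a bound, uniform over all susceptible vertices $i$ and all times $t$ simultaneously, on the deviation of the local infection pressure in the mimicking graph from its mean $\cJ(t)$, with exponentially small failure probability. Our first attempt is a union bound over $i\in[n]$ and a fine time grid, using Hoeffding for the conditionally independent edges given the vertex types (Property \ref{P2}). We would then interpolate between grid points using that only $O(n\,\Delta t)$ edge and vertex events occur in each small interval. The global dependence of $\pi_s$ on $F$ must be handled via $d_L(F^*_n,F)$ small, which follows from Property \ref{P1} together with an exponential large-deviation bound for the empirical measure of independent vertex paths.
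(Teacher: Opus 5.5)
Your plan follows the paper's proof. It uses the same shared-clock coupling; your Poisson random measure is the paper's rate-$\lambda$ clock with uniform marks. It splits the infection-rate mismatch of a vertex susceptible in both processes the same way, into a discrepancy term plus the deviation of the mimicking local pressure from $\cJ(t)$. It controls new edge discrepancies through Lipschitz continuity and $d_L(F_n(s),F(s))\le D_n(s)+d_L(F^*_n(s),F(s))$. It closes with a Gr\"onwall-type argument, which the paper carries out as binomial domination over windows of length $\Delta$ followed by the iteration of \cite[Lemma 3.6]{BBdHM24}. In two places your execution is sharper than the paper's. You keep the contribution $D_n$ of vertices with differing types in the discrepancy term, whereas the paper's bound on (i) counts only edges. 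Your direct Hoeffding/DKW control of the mimicking pressure, via Property \ref{P2} and the i.i.d.\ vertex paths, gives exponential rates, whereas Lemma \ref{lmm:mimicking} and Glivenko--Cantelli give only convergence in probability.

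The step that fails as written is the concentration of edge discrepancies. You justify it by saying that, conditional on the vertex paths, the edges evolve independently. This holds for $G_n^*$ but not for $G_n$. The vertex paths of the co-evolutionary process are functions of the edge randomness (an infection time depends on the edges to infected vertices), so conditioning on them correlates the edges. This is exactly the feedback the mimicking process is built to bypass, so Hoeffding cannot be applied in that form.

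Replace it by the argument you already use for vertices. Each ring of a pair $ij$ of non-discrepant vertices carries a fresh mark $U$, independent of the past, so it creates a new discrepancy with conditional probability at most $L\,d_L(F_n(s),F(s))$. Hence the counting process of new edge discrepancies has intensity at most $\gamma\binom{n}{2}L\,(D_n(s)+d_L(F^*_n(s),F(s)))$; note that your bound omits the factor $\gamma$. An exponential-martingale bound, or the paper's domination by $\mathrm{Bin}(N_E,\cdot)$ on each window, then gives tails $e^{-cn^2}$.

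Two smaller repairs are also needed:
\begin{enumerate}
\item The reduction of the $L_1$ distance to $2D_n$ plus the discrepant-edge fraction holds only when both empirical graphons use the common original labels. With type-sorted labels a single discrepant vertex shifts $\Theta(n)$ positions and can change the $L_1$ distance by order one, so the reference to ordering by type should be dropped.
\item The $p_0$-resampling at a common recovery time must use shared uniforms; otherwise non-discrepant pairs acquire discrepancies there.
\end{enumerate}
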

\begin{proof}
    The claim is proved in three steps. In Step I, we construct a coupling between the original and the mimicking processes. The key idea is to associate susceptible vertices with Poisson clocks whose rates differ between the two processes, and to couple these clocks by simulating a Poisson process of intensity $\lambda$ up to time $t$ and retaining each point in $G_n(t)$ and $G_n^*(t)$ with appropriately chosen probabilities, specified below. The first retained point determines the infection time of the vertex in the corresponding process; if no point is retained up to time $t$, the vertex remains susceptible. The same construction applies to the edge dynamics. A {\it difference} arises when a vertex (resp.\ an edge) is infected (resp.\ active) in one process but susceptible (resp.\ inactive) in the other. Step II characterizes the probability of such differences, and Step III provides precise estimates.
 
    \noindent
    {\it Step I: description of the coupling.} Suppose that $(G_n(t))_{t\geq0}$ and $(G^*_n(t))_{t\geq0}$ are generated in the following manner. Recall that $\cN_i\hI(t)$ is the set of infected neighbors of vertex $i$ at time $t$.
    \begin{itemize}
        \item Each initially infected vertex $i\in[n]$ is assigned the same (coupled) rate-1 Poisson clock in both processes. When the clock associated with vertex $i$ rings, vertex $i$ recovers and all the adjacent edges to vertex $i$ are resampled according to the initial edge probability $p_0$ in both $G_n(t)$ and $G_n^*(t)$.
        \item Each initially susceptible vertex $i\in[n]$ is assigned the same (coupled) rate-$\lambda$ Poisson clock in both processes. When the clock associated to vertex $i$ rings, generate an outcome $U$ drawn from $\text{Unif}(0,1)$ distribution.
        \begin{itemize}
            \item If $U\leq\sum_{j\in \cN_i\hI(t)} {\mathscr I}(y_j)/n$, then vertex $i$ becomes infected, otherwise remains susceptible, in $G_n(t)$.
            \item If $U\leq \cJ^*(t)$, then vertex $i$ becomes infected, otherwise remains susceptible, in $G^*_n(t)$.
        \end{itemize}
     
        \item Assign each edge the same (coupled) rate-$\gamma$ Poisson clock in both processes. When the Poisson clock associated with edge $ij$ rings, generate an outcome $U$ drawn from $\text{Unif}(0,1)$ distribution.
        \begin{itemize}
            \item When $x_i(t)=x_j(t)=\rS$, if $U\leq \pi_{\rS\rS}(F_n(t,\cdot))$, then edge $ij$ is active, otherwise it is inactive, in $G_n(t)$.
            \item When either $x_i(t)=\rS$ and $x_j(t)=\rI$, or $x_i(t)=\rI$ and $x_j(t)=\rS$, if $U\leq \pi_{\rS\rI}(y_j(t), F_n(t,\cdot))$ or $U\leq \pi_{\rS\rI}(y_i(t), F_n(t,\cdot))$, respectively, then edge $ij$ is active, otherwise it is inactive, in $G_n(t)$.
            \item When $x_i(t)=x_j(t)=\rI$, if $U\leq \pi_{\rI\rI}(y_i(t), y_j(t), F_n(t,\cdot))$, then edge $ij$ is active, otherwise it is inactive, in $G_n(t)$.
            \item When $x^*_i(t)=x^*_j(t)=\rS$, if $U\leq \pi_{\rS\rS}(F^*(t,\cdot))$, then edge $ij$ is active, otherwise it is inactive, in $G^*_n(t)$.
             \item When either $x^*_i(t)=\rS$ and $x^*_j(t)=\rI$, or $x^*_i(t)=\rI$ and $x^*_j(t)=\rS$, if $U\leq \pi_{\rS\rI}(y^*_j(t), F^*(t,\cdot))$ or $U\leq \pi_{\rS\rI}(y^*_i(t), F^*(t,\cdot))$, respectively, then edge $ij$ is active, otherwise it is inactive, in $G^*_n(t)$.
            \item When $x^*_i(t)=x^*_j(t)=\rI$, if $U\leq \pi_{\rI\rI}(y^*_i(t), y^*_j(t), F^*(t,\cdot))$, then edge $ij$ is active, otherwise it is inactive, in $G^*_n(t)$.
        \end{itemize}
    \end{itemize}

    \noindent
    {\it Step II: majorization of the $L_1$ distance.} If vertex $i$ is susceptible in both models and the clock associated with vertex $i$ rings, then a {\it difference} is formed when vertex $i$ is susceptible in one process and infected in the other. This happens with probability
    \[
    \Bigg| \sum_{j\in \cN_i\hI(t)} \dfrac{{\mathscr I}(y_j)}{n} - \cJ^*(t)\Bigg|.
    \]
    If edge $ij$ has the same state (active or inactive) in both models and the clock associated with edge $ij$ rings, then a difference is formed with probability depending on the generalised types $X_i(t), X_j(t),X_i^*(t),X_j^*(t)$, and on $F_n(t,\cdot),F^*(t,\cdot)$. For instance, if $x_i(t)=x_i^*(t)=\rS$ and $x_j(t)=x_j^*(t)=\rI$, this probability is equal to 
    \[|\pi_{\rS\rI}(y_j(t), F_n(t,\cdot)) - \pi_{\rS\rI}(y^*_j(t), F^*(t,\cdot))|.
    \]

    \noindent
    {\it Step III: bounding the dominating process.} Our approach is to show that, over a small time window $[t, t+\Delta)$ with $\Delta>0$, the number of differences is stochastically dominated by a random variable with suitable properties. Let 
    \begin{itemize}
        \item $t_\Delta=[t,t+\Delta)$ be the time window,
        \item $N_E\sim\text{Bin}(\binom{n}{2},1-\eee^{-\gamma\Delta})$ be the total number of edge clocks that ring in $t_\Delta$,
        \item $N_V\sim\text{Bin}({\mathcal N}\sS(t),1-\eee^{-\lambda\Delta})$ be the total number of clocks associated with susceptible vertices in both processes at time $t$ that ring in $t_\Delta$, where ${\mathcal N}\sS(t)$ is the number of susceptible vertices at time $t$,
        \item $d_E(t)$ be the total number of differences in the edges between $G_n(t)$ and $G_n^*(t)$,
        \item $d_V(t)$ be the total number of differences in vertex states between $F_n(t;\cdot)$ and $F_n^*(t;\cdot)$, where $F_n$ is defined in \eqref{eq:emptypeprocess} and $F_n^*$ is defined analogously using the graph $G_n^*(t)$. Here, by $F_n(t;\cdot)$ and $F_n^*(t;\cdot)$ we mean the complete information on vertex types and states.
    \end{itemize}
Given $\Omega\equiv(N_E,N_V,d_E(t),d_V(t),F_n(t;\cdot), F^*_n(t;\cdot))$, we want to bound the probability $\Pi_n(\Delta)$ that, when an edge clock (of an edge chosen uniformly at random) rings, a difference is formed during any time $s\in t_\Delta$. Note that if the clock of a vertex pair $ij$ rings, then a difference can only be formed when either $x_i(s)\neq x_i^*(s)$ or $x_j(s)\neq x_j^*(s)$, or there is a difference in the types or in the empirical type distributions $F_n(s;\cdot)$ and $F^*(s;\cdot)$. At any time $t$, we can then partition the set of all the vertex pairs as those connecting at least one vertex with a difference of state between the two models, and those which connect both vertices having the same state in the two models, which we refer to as $E_1(t)$ and $E_2(t)$, respectively. Since each vertex $i$ is part of $n$ vertex pairs, note that
\[
|E_1(t)| \leq n \,(d_V(t) + N_V).
\]

Observing that $d_V(t)=\sum_{i=1}^n \mathbf{1}\{x_i(t)\neq x_i^*(t)\}$, considering the worst case scenario that every time a vertex clock rings during $t_\Delta$ a new difference is formed, and observing that the type of two infected vertices can differ of at most $\Delta$ during $t_\Delta$, we have the following upper bound:
\begin{equation}\label{eq:boundpV}
\begin{aligned}
\Pi_n(\Delta)
&\leq \frac{|E_1(t)|}{\binom{n}{2}}
   + L\, \big(2\Delta + d_L(F_n(s;\cdot), F^*(s;\cdot))\big) \\
&\leq \frac{n(d_V(t)+N_V)}{\binom{n}{2}}
   + L\, \big(2\Delta + d_L(F_n(s;\cdot), F^*(s;\cdot))\big).
\end{aligned}
\end{equation}
where we have used the Lipschitz continuity of the functions $\pi_{\rS\rS},\pi_{\rS\rI},\pi_{\rI\rI}$, with the constant $L$ defined as $\max\{L_{\rS\rS}, L_{\rS\rI}, L_{\rI\rI}\}<\infty$. To control this probability, using the triangular inequality for the Lévy metric (see Appendix \ref{appA}), we can write
\[
 d_L(F_n(s;\cdot), F^*(s;\cdot)) \leq  \underbrace{d_L(F_n(s;\cdot), F_n^*(s;\cdot))}_{(a)} +  \displaystyle \underbrace{d_L(F^*_n(s;\cdot), F^*(s;\cdot))}_{(b)}.
\]
Considering the worst case scenario as explained above, for any $s\in t_\Delta$, the term (a) can be bounded as
\[
d_L(F_n(s,\cdot), F_n^*(s,\cdot)) \leq \dfrac{d_V(t)+N_V}{n},
\]
while the term (b) tends to zero as $n\to\infty$ directly from the Glivenko-Cantelli theorem, because in the mimicking process vertices behave independently and therefore $F^*_n(s;\cdot)$ converges in distribution to $F^*(s;\cdot)$. Thus, \eqref{eq:boundpV} becomes
\[
\begin{aligned}
\Pi_n(\Delta)
&\leq \dfrac{n(d_V(t)+N_V)}{\binom{n}{2}}
   + L \left(2\Delta + \dfrac{d_V(t)+N_V}{n} + o(n) \right) \\
&\leq \left(1 + \frac{L}{2}\right)
   \dfrac{n(d_V(t)+N_V)}{\binom{n}{2}}
   + 2L\Delta + o(n).
\end{aligned}
\]
Because this bound is uniform in $t_\Delta$ and does not depend on $N_E$, given $\Omega$, and the term $o(n)$ can be ignored in the limit as $n\to\infty$, we therefore have
\[
d_E(t+\Delta) - d_E(t) \overset{\rm st}{\leq} \text{Bin} \left( N_E, \left(1 + \frac{L}{2}\right)\dfrac{n(d_V(t)+N_V)}{\binom{n}{2}} + 2L\Delta \right),
\]
where $\overset{\rm st}{\leq}$ means ‘stochastically dominated by'.

Given $\Omega$, we next want to bound the probability that, when a vertex clock (of a susceptible vertex in both processes at time $t$, chosen uniformly at random, that is) rings, a difference is formed during any time $s\in t_\Delta$. In other words, we wish to establish an upper bound on 
\[
\dfrac{1}{n}\sum_{i=1}^n\Bigg| \sum_{j\in \cN_i\hI(s)} \dfrac{{\mathscr I}(y_j)}{n} - \cJ^*(s)\Bigg|,
\]
which applies for all $s\in t_\Delta$. Applying the triangular inequality, we have
\[
\begin{array}{ll}
\displaystyle\dfrac{1}{n}\sum_{i=1}^n\Bigg| \sum_{j\in \cN_i\hI(s)} \dfrac{{\mathscr I}(y_j)}{n} - \cJ^*(s)\Bigg| \\ 
\qquad \qquad \leq 
\underbrace{\displaystyle\dfrac{1}{n}\sum_{i=1}^n\Bigg| \sum_{j\in \cN_i\hI(s)} \dfrac{{\mathscr I}(y_j)}{n} - \sum_{j\in \cN_i^{*\rI}(s)} \dfrac{{\mathscr I}(y_j^*)}{n}\Bigg|}_{(i)} + \underbrace{\displaystyle \displaystyle\dfrac{1}{n}\sum_{i=1}^n\Bigg| \sum_{j\in \cN_i^{*\rI}(s)} \dfrac{{\mathscr I}(y_j^*)}{n} - \cJ^*(s)\Bigg|}_{(ii)},
\end{array}
\]
where $\cN_i^{*\rI}(s)$ (evidently) denotes the set of infected neighbours of vertex $i$ in $G^*_n(s)$. We first bound (i) as follows:
\[
\begin{array}{ll}
\displaystyle\dfrac{1}{n}\sum_{i=1}^n\Bigg| \sum_{j\in \cN_i\hI(s)} \dfrac{{\mathscr I}(y_j)}{n} - \sum_{j\in \cN_i^{*\rI}(s)} \dfrac{{\mathscr I}(y_j^*)}{n}\Bigg| &= \displaystyle\dfrac{1}{n^2}\sum_{i=1}^n\Bigg| \sum_{j\in \cN_i\hI(s)} {\mathscr I}(y_j) - \sum_{j\in \cN_i^{*\rI}(s)} {\mathscr I}(y_j^*)\Bigg| \\
&\leq \dfrac{1}{n^2} (d_E(t) + N_E),
\end{array}
\]
where we used that $\sup_{u\in [0,t+\Delta]} {\mathscr I}(u)=1$.

We next bound (ii). To this end, we use the convergence of the mimicking process in the space of graphons. Let $g_n^*(s;u,v)=h^{G_n^*}(s;u,v)$ denote the empirical graphon at time $s$. In addition, let $r_n^{\rS+\rI}(s)$ and $r^{\rS+\rI}(s)$ denote the sum of the proportions of susceptible and infected vertices in the empirical and limiting graphon, respectively, at time $s$. {Similarly, $r_n^{\rS}(s)$ and $r^{\rS}(s)$ denote the corresponding quantities for susceptible vertices.} Thus, we can write
\[
\dfrac{1}{n}\sum_{j\in \cN_i^{*\rI}(s)} {\mathscr I}(y_j^*) = \int_{r_n\hS(s)}^{r_n^{\rS+\rI}(s)} \dd u \, g_n^*\Bigl(s;\frac{i}{n},u\Bigr) {\mathscr I}(F^*(s;u)),
\]
using the definition of the empirical graphon. In addition, by Lemma \ref{lmm:additional} we have 
\[
\cJ^*(s) = \int_{r\hS(s)}^{r^{\rS+\rI}(s)} \dd y \, g^{[F^*]} (s;y,0) {\mathscr I}(F^*(s;y)).
\]
We can therefore write
\begin{align*}
\frac{1}{n}\sum_{i=1}^n
\Bigg|
\sum_{j\in \cN_i^{*\mathrm{I}}(s)} \frac{{\mathscr I}( y_j^*)}{n}
- \cJ^*(s)
\Bigg|
&= \frac{1}{n} \sum_{i=1}^n
\Bigg|
\int_{r_n^{\mathrm{S}}(s)}^{r_n^{\mathrm{S}+\mathrm{I}}(s)}
\dd u \,
g_n^*\Bigl(s;\frac{i}{n},u\Bigr)
{\mathscr I}(F^*(s;u)) \\
&\qquad
- \int_{r^{\mathrm{S}}(s)}^{r^{\mathrm{S}+\mathrm{I}}(s)}
\dd y \,
g^{[F^*]} (s;y,0)
{\mathscr I}(F^*(s;y))
\Bigg| \\
&= \Bigg|
\int_{r_n^{\mathrm{S}}(s)}^{r_n^{\mathrm{S}+\mathrm{I}}(s)}
\dd u \,
g_n^*(s;0,u)
{\mathscr I}(F^*(s;u)) \\
&\qquad
- \int_{r^{\mathrm{S}}(s)}^{r^{\mathrm{S}+\mathrm{I}}(s)}
\dd y \,
g^{[F^*]} (s;y,0)
{\mathscr I}(F^*(s;y))
\Bigg| \\
&\leq
\bigl|
(r_n^{\mathrm{S}+\mathrm{I}}(s) - r^{\mathrm{S}+\mathrm{I}}(s))
+ (r_n^{\mathrm{S}}(s) - r^{\mathrm{S}}(s))
\bigr|
\\
&\qquad + d_{\square}\!\left(
g_n^*(s;0,\cdot),
g^{[F^*]}(s;0,\cdot)
\right).
\end{align*}

Note that, by the FLLN claimed in Lemma \ref{lmm:mimicking}), as $n\to\infty$ each of these terms tends to zero uniformly in $s$ with high probability. Thus, we conclude that
\[
d_V(t+\Delta) - d_V(t) \overset{\rm st}{\leq} \text{Bin}\left( N_V, \dfrac{1}{n^2} (d_E(t) + N_E) + \epsilon(n) \right),
\]
where $\epsilon(n)$ is a function that can be chosen arbitrarily small with high probability, and can therefore effectively ignored in the limit as $n\to\infty$.

Our plan is now to bound the number of differences formed in the vertices and edges between the two models by considering the time intervals $[0,\Delta], [\Delta, 2\Delta],...,[T-\Delta,T]$ in sequence, and then use the union bound. Without loss of generality, we are tacitly assuming that $T$ is a multiple of $\Delta$. We use the results above which, for $n$ large, can be summarised as follows:
\begin{align*}
 N_E &\overset{\rm st}{\leq} \text{Bin} \left( \binom{n}{2}, \lambda\Delta \right), \\
 N_V &\overset{\rm st}{\leq} \text{Bin} \left( n, \gamma\Delta \right), \\
 d_E(t+\Delta) - d_E(t) &\overset{\rm st}{\leq} \text{Bin} \left( N_E, \left(1 + \frac{L}{2}\right)\dfrac{n(d_V(t)+N_V)}{\binom{n}{2}} + 2L\Delta \right), \\
 d_V(t+\Delta) - d_V(t) &\overset{\rm st}{\leq} \text{Bin} \left( N_V, \dfrac{d_E(t)+N_E}{n^2} \right).
\end{align*}
The proof is then completed by following arguments analogous to those used in the proof of \cite[Lemma 3.6]{BBdHM24}.
\end{proof}

%%%%%

\subsection{Proof of Theorem \ref{thm:graphonconv}}

Theorem \ref{thm:graphonconv} now follows from Lemmas \ref{lmm:mimicking}--\ref{lmm:coupling} and \cite[Theorem 3.10]{BdHM22}.  

%%%%%%%%%%%%%%%%%%%%%%%%%%%%%%%%%%%%%%%%%%%%%%%%%%%%%%%

\appendix

%%%%%%%%%%%%%%%% APPENDIX A %%%%%%%%%%%%%%%%%%%%%%%%%%%%%%

\section{Graphons, Lévy metric and an additional lemma}
\label{appA}

%%%

\medskip\noindent
{\bf Graphons.}
Let $\cW$ be the space of functions $h\colon\,[0,1]^2 \to [0,1]$ such that $h(x,y) = h(y,x)$ for all $(x,y) \in [0,1]^2$, endowed with the {\it cut distance}
\begin{equation}
\label{eq:cutdist}
d_{\square}(h_1,h_2) := \sup_{S,T\subseteq[0,1]}\left|\int_{S \times T} \dd x\, \dd y\, [h_1(x,y)-h_2(x,y)]\right|, \quad h_1,h_2 \in \cW.
\end{equation}
On $\cW$, called the space of graphons, there is a natural equivalence relation $\sim$. Let $\Sigma$ be the space of measure-preserving bijections $\sigma\colon\, [0,1] \to [0,1]$. Then $h_1(x,y) \sim h_2(x,y)$ if $\delta_{\square}(h_1,h_2)=0$, where $\delta_{\square}$ is the \emph{cut metric} defined by 
\begin{equation}
\label{deltam}
\delta_{\square}(\tilde{h}_1,\tilde{h}_2) 
:= \inf _{\sigma_1,\sigma_2 \in \Sigma} d_{\square}(h_1^{\sigma_1}, h_2^{\sigma_2}),
\qquad \tilde{h}_1,\tilde{h}_2 \in \widetilde\cW,
\end{equation}
with $h^\sigma(x,y)=h(\sigma x,\sigma y)$. This equivalence relation yields the quotient space $(\widetilde\cW,\delta_{\square})$, which is compact.

A finite simple undirected graph $G$ on $n$ vertices can be represented as a graphon $h^G\in\cW$ by setting
\begin{equation}
\label{eq:graphon}
h^G(x,y) := \left\{
\begin{array}{ll}
1 &\hbox{if there is an edge between vertex } \lceil nx\rceil \hbox{ and vertex } \lceil ny \rceil, \\
0 &\hbox{otherwise},
\end{array}
\right.
\end{equation}
which referred to as the {\it empirical graphon} associated with $G$, and has a block structure.

%%%

\medskip\noindent
{\bf Lévy metric.} We equip the space $\cM(\mathbb{R}_+)$ with the {\it Lévy metric}, so that for two distribution functions $F,G$ we have
\begin{equation}\label{eq:Levy}
    d_L(F,G) := \inf\{ \epsilon>0 : F(x-\epsilon)-\epsilon \leq G(x) \leq F(x+\epsilon)+\epsilon, \forall \ x\in\mathbb{R} \}
\end{equation}
An important property of the Lévy metric is that it satisfies the triangle inequality.

\begin{lemma}\label{lmm:additional}
    For any $s\in t_\Delta$,
    \[ 
    \cJ^*(s)= \int_{r\hS(s)}^{r^{\rS+\rI}(s)} \dd y \, g^{[F^*]} (s;y,0) {\mathscr I}(F^*(s;y)).
    \]
\end{lemma}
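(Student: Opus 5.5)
The identity will follow from a change of variables $u=F^*(s;y)$, i.e.\ by pushing Lebesgue measure on $[0,1]$ through the generalized inverse $\bar F^*(s;\cdot)$. Recall that $\cJ^*(s)=\int_0^s \dd F^*(s;u)\,H(s;-1,u,F^*)\,{\mathscr I}(u)$; this is \eqref{eq:rateI} with $F$ replaced by $F^*$. We read the integrand on the right-hand side of the lemma as ${\mathscr I}(\bar F^*(s;y))$, the infectivity of the vertex at graphon position $y$. This is the interpretation consistent with the empirical counterpart $\frac1n\sum_{j\in\cN_i^{*\rI}(s)}{\mathscr I}(y_j^*)$ used in the proof of Lemma \ref{lmm:coupling}.

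The first step is the quantile transform. If $Y$ is uniform on $[0,1]$, then $\bar F^*(s;Y)$ has distribution function $F^*(s;\cdot)$. Hence for any bounded measurable $\varphi$ we have $\int_0^1 \dd y\,\varphi(\bar F^*(s;y))=\int \dd F^*(s;u)\,\varphi(u)$. We apply this with
\[
\varphi(u)=\mathbf 1\{u\in[0,s]\}\,H(s;-1,u,F^*)\,{\mathscr I}(u).
\]

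The second step identifies the set $\{y:\bar F^*(s;y)\in[0,s]\}$. Since $F^*(s;\cdot)$ has an atom of size $p\sS(s)$ at $-1$, no mass in $(-1,0)$ or in $(s,T+1)$, and an atom $p\sR(s)$ at $T+1$, this set is the interval with endpoints $r\hS(s)=p\sS(s)$ and $r^{\rS+\rI}(s)=p\sS(s)+p\sI(s)$, up to endpoints of measure zero. The point mass at infection age $s$, coming from the initially infected vertices, is included and causes no trouble, because it lies inside the interval.

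The third step rewrites $H$. At $x=0$ we have $\bar F^*(s;0)=-1$, since $F^*(s;-1)=p\sS(s)>0$. Definition \eqref{eq:graphonprocess} together with the symmetry of $H$ in its two type arguments then gives
\[
g^{[F^*]}(s;y,0)=H(s;\bar F^*(s;y),-1,F^*)=H(s;-1,\bar F^*(s;y),F^*).
\]
Combining the three steps yields the claim.

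The main obstacle is bookkeeping rather than analysis. First, if $p\sS(s)=0$, which cannot happen for finite $s$ given Theorem \ref{thm:PDEsolutions}(i), the evaluation at $x=0$ would fail. Second, the notation ${\mathscr I}(F^*(s;y))$ must be interpreted as ${\mathscr I}(\bar F^*(s;y))$, as discussed above. Third, the generalized inverse needs care at atoms, but these boundary effects affect only Lebesgue-null sets of $y$. Measurability of $\varphi$ follows from the explicit piecewise formula \eqref{eq:H} and the continuity of ${\mathscr I}$.
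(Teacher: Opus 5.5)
Your proof is correct and follows the same route as the paper: the substitution $u=\bar F^*(s;y)$, together with the identification $H(s;-1,\bar F^*(s;y),F^*)=g^{[F^*]}(s;y,0)$. You justify each step more carefully than the paper does. The quantile transform replaces the paper's differentiation of the inverse function, so the atom at infection age $s$ is handled. You also check explicitly that $\bar F^*(s;0)=-1$ and that $H$ is symmetric, and your reading of ${\mathscr I}(F^*(s;y))$ as ${\mathscr I}(\bar F^*(s;y))$ is what the paper's proof implicitly uses.
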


\begin{proof}
By definition, we have
\[
\cJ^*(s)=\mathcal I(s;f\sI^*(s,\cdot),F^*(s,\cdot)) = \displaystyle\int_0^T \dd u \, f\sI^*(s,u) H(s;-1,u,F^*(s,\cdot)) {\mathscr I}(u).
\]
By using the change of variable $u=F^*(s;y)$, we obtain
\[
\begin{aligned}
\cJ^*(s)
&= \int_{r^{\mathrm{S}}(s)}^{r^{\mathrm{S}+\mathrm{I}}(s)}
   \dd y \,
   H\bigl(s;-1,F^*(s;y),F^*(s,\cdot)\bigr)
   {\mathscr I}(F^*(s;y)) \\
&= \int_{r^{\mathrm{S}}(s)}^{r^{\mathrm{S}+\mathrm{I}}(s)}
   \dd y \,
   g^{[F^*]}(s;y,0)
   {\mathscr I}(F^*(s;y)).
\end{aligned}
\]
where in the first equality we used the fact that $\dd y = f\sI^*(s,F^*(s;y)) \, \dd u$. This follows from the derivative of the inverse function, which is well defined because the function $F^*(s;\cdot)$ is right-continuous and non-decreasing. Thus, the derivative is zero only in the intervals in which the function is constant, meaning that there are no infected individuals with those types, and therefore we may think the integral not running on these values. Finally, note that the integral goes from ${r^\rS(s)}$ to ${r^{\rS+\rI}(s)}$ because otherwise ${\mathscr I}(\cdot)\equiv0$ and therefore the integrand is zero.
\end{proof}

\end{document}